






\documentclass[11pt, a4paper]{amsart}


   %
   %

      \usepackage{amssymb}
\usepackage{color}

   %

      \theoremstyle{plain}
      \newtheorem{theorem}{Theorem}[section]
      \newtheorem{lemma}[theorem]{Lemma}
      \newtheorem{corollary}[theorem]{Corollary}

      \theoremstyle{definition}
      
      \newtheorem*{remark}{Remark}


\numberwithin{equation}{section}
\def\co{\colon\thinspace}


      \makeatletter
      \def\@setcopyright{}
      \def\serieslogo@{}
      \makeatother

\usepackage[arrow, matrix, curve]{xy}

\begin{document}

%


   \author{Sungwoon Kim}
   \address{School of Mathematics,
   KIAS, Hoegi-ro 85, Dongdaemun-gu,
   Seoul, 130-722, Republic of Korea}
   \email{sungwoon@kias.re.kr}

   \author{Thilo Kuessner}
   \address{School of Mathematics,
   KIAS, Hoegi-ro 85, Dongdaemun-gu,
   Seoul, 130-722, Republic of Korea}
   \email{kuessner@kias.re.kr}





   \title[Compact manifolds with amenable boundary]{Simplicial volume of compact manifolds with amenable boundary}


\begin{abstract}
Let $M$ be the interior of a connected, oriented, compact manifold $V$ of dimension at least $2$. If each path component of $\partial V$ has amenable fundamental group, then we prove that the simplicial volume of $M$ is equal to the relative simplicial volume of $V$ and also to the geometric (Lipschitz) simplicial volume of any Riemannian metric on $M$ whenever the latter is finite. As an application we establish the proportionality principle for the simplicial volume of complete, pinched negatively curved manifolds of finite volume.
\end{abstract}

\footnotetext[0]{2000 {\sl{Mathematics Subject Classification: 53C23}}
}


   \keywords{}

   \thanks{}
   \thanks{}

   \dedicatory{}

   \date{}


   \maketitle



    \section{Introduction}

The simplicial volume $\| M \|$ of a manifold $M$ was introduced by Gromov \cite{Gro82}, it estimates how efficiently the fundamental class of $M$ can be represented by singular simplices.
Even though its definition relies only upon the topological structure of $M$, it surprisingly contains much information about the geometric nature of $M$ and has played an important role in the study of the minimal volume, rigidity and so on.

There are different definitions of the simplicial volume for closed manifolds, open manifolds and compact manifolds with boundary. In each case it will be defined as the infimum of an $l^1$-norm $\parallel \sum_{i\in I} a_i\sigma_i\parallel_1=\sum_{i\in I}\mid a_i\mid$, where the infimum is taken over all chains representing a suitably defined fundamental class in some homology theory with real coefficients. 

To give the precise definitions, let $M$ be an oriented, connected, $n$--dimensional manifold. Throughout this paper we will always use homology groups with $\mathbb R$-coefficients. If $M$ is a {\em closed manifold}, then its {\em simplicial volume} $\| M \|$ is defined as the $\ell^1$--seminorm of
the fundamental class in the singular homology $H_n(M)$ of $M$ with $\mathbb{R}$--coefficients.
If $M$ is a {\em compact manifold with boundary}, then its
{\em relative simplicial volume} $\| M, \partial M \|$ is defined as the $\ell^1$--seminorm of the relative fundamental class in the relative singular homology $H_n(M,\partial M)$ with $\mathbb{R}$--coefficients.
If $M$ is an {\em open manifold}, then its {\em simplicial volume} $\| M \|$ is
defined as the $\ell^1$--seminorm of the locally finite fundamental class in the locally finite homology $H^\mathrm{lf}_n(M)$ of $M$. Moreover, Gromov introduced the notion of the geometric simplicial volume of Riemannian manifolds. The {\em geometric simplicial volume} $\| M,g_M \|_\mathrm{Lip}$ of a Riemannian manifold $(M,g_M)$ is defined as the infimum over the $\ell^1$--norms of locally finite fundamental cycles with finite Lipschitz constant. If $M$ is a closed Riemannian manifold, then one actually has $\| M,g_M\|_\mathrm{Lip}=\| M\|$, so the geometric simplicial volume of a closed manifold does not depend on the Riemannian metric. On the other hand, the geometric simplicial volume of open manifolds may depend on their Riemannian metric.

Gromov established a remarkable {\em proportionality
principle} relating the simplicial volume and the volume of closed Riemannian manifolds.

\begin{theorem}[Gromov \cite{Gro82}]\label{thm:1.1}
If $(M,g_M)$ and $(N,g_N)$ are closed Riemannain manifolds with isometric universal covers, then
$$\frac{\| M \|}{\mathrm{Vol}(M,g_M)}= \frac{\| N \|}{\mathrm{Vol}(N,g_N)}.$$
\end{theorem}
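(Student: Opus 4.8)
The plan is to prove the two inequalities $\frac{\|M\|}{\mathrm{Vol}(M,g_M)}\le \frac{\|N\|}{\mathrm{Vol}(N,g_N)}$ and its reverse by a single symmetric construction, Thurston's \emph{smearing} procedure, carried out in the language of \emph{measure homology}. Let $X$ denote the common Riemannian universal cover, and let $G=\mathrm{Isom}(X)$ be its isometry group. The deck groups $\Gamma_M$ and $\Gamma_N$ act on $X$ by isometries, properly discontinuously and cocompactly (as $M$ and $N$ are closed), so $G$ is a locally compact, second countable group acting properly and cocompactly on $X$ with compact point stabilizers; in particular $G$ is unimodular and $\Gamma_M,\Gamma_N\le G$ are lattices, with $M=\Gamma_M\backslash X$ and $N=\Gamma_N\backslash X$. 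It is precisely the hypothesis of isometric universal covers that allows both manifolds to be realized as quotients of one Riemannian manifold by lattices in one group.

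First I would pass from singular homology to measure homology $\mathcal{H}_*^{\mathrm{sm}}$, whose chains are signed measures of finite total variation on the spaces $\mathrm{map}(\Delta^k,Y)$ of singular simplices. It is known that the map sending a simplex to its Dirac mass is a chain map inducing an \emph{isometric} isomorphism between singular homology with the $\ell^1$--seminorm and measure homology with the total-variation seminorm, so $\|M\|$ and $\|N\|$ may be computed as infima of total variations of fundamental cycles in $\mathcal{H}_n^{\mathrm{sm}}$. This reformulation is what makes averaging over the continuous group $G$ available, and it avoids the geodesic-straightening difficulties that would arise for a cover of arbitrary curvature.

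Next, fix a left Haar measure on $G$ and the induced finite invariant measure $\mu$ on $\Gamma_N\backslash G$, of total mass $\mathrm{covol}(\Gamma_N)$. Choosing a Borel fundamental domain for the $\Gamma_M$--action, I would define the smearing homomorphism $\mathrm{smear}\co \mathcal{C}_n^{\mathrm{sm}}(M)\to\mathcal{C}_n^{\mathrm{sm}}(N)$ that lifts a simplex of $M$ to $X$, transports it by every $g\in G$, projects the result to $N$, and integrates the corresponding Dirac masses over $\Gamma_N\backslash G$ against $\mu$. One verifies that $\mathrm{smear}$ is a chain map, that after the standard normalization it carries a fundamental cycle of $M$ to a fundamental cycle of $N$, and that it multiplies the total variation by the constant factor $\mathrm{covol}(\Gamma_N)/\mathrm{covol}(\Gamma_M)$. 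Since $\mathrm{Vol}(\Gamma\backslash X)=\kappa\cdot\mathrm{covol}(\Gamma)$ for a constant $\kappa$ depending only on the comparison of Haar measure on $G$ with the Riemannian volume on $X$ via the orbit map, this factor equals $\mathrm{Vol}(N,g_N)/\mathrm{Vol}(M,g_M)$: the normalization constant $\kappa$ is common to $M$ and $N$ and cancels, which is exactly why the two volumes enter proportionally. Taking the infimum over fundamental cycles of $M$ then yields $\|N\|\le \tfrac{\mathrm{Vol}(N,g_N)}{\mathrm{Vol}(M,g_M)}\|M\|$, that is $\frac{\|N\|}{\mathrm{Vol}(N,g_N)}\le \frac{\|M\|}{\mathrm{Vol}(M,g_M)}$. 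Interchanging the roles of $M$ and $N$ gives the reverse inequality, and hence equality.

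The main obstacle I anticipate is foundational rather than computational: establishing that the Dirac map induces an \emph{isometric} (not merely isomorphic) identification of singular and measure homology, so that total variations genuinely compute the simplicial volume, together with the measure-theoretic care needed to make the smearing integral well defined — selecting Borel fundamental domains and sections, checking that the integrand is measurable with finite total variation, and confirming by a degree computation that the smeared cycle represents the fundamental class of $N$ with the correct orientation. Pinning down the Haar-versus-volume normalization, so that the proportionality constant is precisely the ratio of Riemannian volumes, is the delicate quantitative heart of the argument.
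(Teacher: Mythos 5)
The paper does not actually prove Theorem \ref{thm:1.1}: it is quoted from Gromov \cite{Gro82}, with \cite{LS09-2} cited for a detailed proof of the (more general) finite-volume statement. So there is no in-paper argument to compare against; judged on its own, your outline is the standard rigorous route -- Thurston's smearing made precise via measure homology -- and it is essentially the argument carried out in \cite{LS09-2} and in L\"oh's thesis, whereas Gromov's original treatment in \cite{Gro82} proceeds dually, by averaging bounded cochains over the isometry group rather than diffusing cycles. Your sketch is sound: realizing $M$ and $N$ as quotients of $X$ by cocompact lattices $\Gamma_M,\Gamma_N$ in the unimodular group $G=\mathrm{Isom}(X)$, smearing a fundamental cycle of $M$ over $\Gamma_N\backslash G$, and comparing covolumes does yield $\|N\|\le\tfrac{\mathrm{Vol}(N,g_N)}{\mathrm{Vol}(M,g_M)}\|M\|$, with the reverse inequality by symmetry. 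You have also correctly located where the real work lies. Two points deserve emphasis. First, the direction of the measure-homology comparison you actually need is the hard one: the Dirac map is trivially norm non-increasing, so $\|[N]\|_{\mathrm{mh}}\le\|[N]\|$, but to pull the smeared measure cycle back to a conclusion about $\|N\|$ you need $\|[N]\|\le\|[N]\|_{\mathrm{mh}}$, i.e.\ the full isometry theorem of L\"oh (itself proved via bounded cohomology); without it the argument only bounds the measure-homology seminorm. Second, independence of the smearing from the choice of lift of a simplex is exactly where unimodularity of $G$ (equivalently, right-invariance of the induced measure on $\Gamma_N\backslash G$) enters, and unimodularity should be justified by the existence of the lattice $\Gamma_M$ rather than by cocompactness of the action alone. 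With those two lemmas supplied, your proposal is a correct proof.
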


The proportionality principle fails in general for open manifolds. However, the proportionality principle holds for the geometric simplicial volume under some condition.
In fact, Gromov \cite{Gro82} showed that the proportionality constants for a closed Riemannian manifold and a complete Riemannian manifold of finite volume whose universal covers are isometric are
the same. We refer the reader to \cite{LS09-2} for a detailed proof.
To prove the proportionality principle for the simplicial volume it is thus desirable to have an equality between simplicial volume and geometric simplicial volume.

Let us consider open manifolds $M$ which are interiors of compact manifolds $V$ with boundary.
Then $\| V,\partial V\|\le \| M\|$ by \cite[Section 1.1]{Gro82},
but in general one does not have equality. (In fact there are many examples with even $\| M\|=\infty$, see \cite[Theorem 6.1]{Lo07}.) Thus one needs additional assumptions to prove $\| V,\partial V\|=\| M\|$.

The vanishing of the simplicial volume of a manifold is often implied by amenability conditions on the manifold.
For instance, the simplicial volume of any manifold with amenable fundamental group is zero.
This follows from the fact that the bounded cohomology of any amenable group vanishes \cite[Section 3.1]{Gro82}. More generally, Gromov \cite[Section 4.2]{Gro82} proved that if there exists an amenable open covering of $M$ such that every point of $M$ is contained in no more than $n$ subsets for $n=\dim M$ and the covering is amenable at infinity, then the simplicial volume of $M$ vanishes.

Such vanishing results suggest that amenable subsets can be ignored from the point of view of the simplicial volume. In this paper we make this precise
for the simplicial volume of an open manifold that is homeomorphic to the interior of a compact manifold with amenable boundary.
Let $V$ be a compact manifold with amenable boundary and $M$ be its interior, equipped with a given Riemannian metric $g_M$. Under some more geometric and topological conditions\footnotemark\footnotetext[1]{The additional assumption needed in \cite{LS09-1} is the following: \\
{\em Each connected component $W_i$
of $\partial V$ has a finite cover $\overline{W}_i\rightarrow W_i$ that has a self-map $f_i \co \overline{W}_i\rightarrow\overline{W}_i$ with $deg\ f_i\not\in\left\{.1, 0, 1\right\}$ with the following property: Let $f_i^{(k)}$ denote the $k$--fold composition of $f_i$. There is $C > 0$
such that for every $k\ge 1$ the map
$$F_i^{(k)} \co \overline{W}_i \times\left[0, 1\right] \rightarrow\overline{W}_i\times\left[k, k + 1\right] , \left(x, t\right)\rightarrow \left(f_i^{(k)}(x) , t + k\right)$$
has Lipschitz constant at most $C$ with respect to the metric on $\overline{W}_i\times\left(0,\infty\right)$ induced by the metric $d$ on $M$. }} on $V$, L\"{o}h and Sauer \cite[Theorem 1.5]{LS09-1} show that
\begin{eqnarray}\label{eqn:1.1}
\| V ,\partial  V \| = \| M \| = \|M,g_M \|_\mathrm{Lip}.
\end{eqnarray}
The result of \cite{LS09-1} includes the cases of $\mathbb{R}$--rank one locally symmetric spaces and Hilbert modular varieties. However it is natural to conjecture that
Equation (\ref{eqn:1.1}) should hold without the additional assumptions on $V$ and this is
the main objective of this paper: we verify that the amenability condition on the boundary of $V$ is enough to prove (\ref{eqn:1.1}).

Our proof builds on a result about bounded cohomology.
Let $i \co (V,\emptyset) \rightarrow (V, \partial V)$ be the canonical inclusion map. We now suppose that each path component of $\partial V$ has amenable fundamental group. Gromov's relative mapping theorem \cite[Section 4.1]{Gro82} does not imply that the pullback map $i^* \co H^*_b(V,\partial V) \rightarrow H^*_b(V)$ is an isometric isomorphism of bounded cohomology groups. Nonetheless, we show that this map is an isometric isomorphism. Our argument uses Gromov's theory of multicomplexes from \cite[Section 3.2]{Gro82}.

\begin{theorem}\label{thm:1.2}
Let $(X,A)$ be a pair of topological spaces. Assume that each path
component of $A$ has amenable fundamental group.
Then inclusion $\left(X,\emptyset\right)\rightarrow \left(X,A\right)$ induces an isometric isomorphism
$$H_b^*(X,A)\rightarrow H_b^*(X)$$
in degrees $*\ge 2$.
\end{theorem}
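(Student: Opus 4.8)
The plan is to separate the two assertions --- bijectivity and isometry --- and to reduce the isometry to a single inequality. I would work throughout with the complex $C_b^*$ of bounded real singular cochains, recalling that the relative complex $C_b^*(X,A)$ consists of the bounded cochains that vanish on every simplex with image in $A$. The inclusion $C_b^*(X,A)\hookrightarrow C_b^*(X)$ is norm-\emph{preserving} on cochains, since for a relative cochain the defining supremum ranges over the same simplices in either complex; consequently $i^*$ is norm-nonincreasing on cohomology, i.e. $\|i^*\alpha\|\le\|\alpha\|$. It therefore suffices to prove that $i^*$ is an isomorphism in degrees $\ge 2$ and, conversely, that $\|\alpha\|\le\|i^*\alpha\|$.

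For the isomorphism I would use the long exact sequence arising from the short exact sequence of complexes $0\to C_b^*(X,A)\to C_b^*(X)\xrightarrow{r} C_b^*(A)\to 0$, the restriction $r$ being surjective because a bounded cochain on $A$ extends by zero to $X$. This gives the exact sequence $\cdots\to H_b^{n-1}(A)\to H_b^n(X,A)\xrightarrow{i^*} H_b^n(X)\to H_b^n(A)\to\cdots$, so it is enough to know that $H_b^n(A)=0$ for every $n\ge 1$: then for $n\ge 2$ both neighbours $H_b^{n-1}(A)$ and $H_b^n(A)$ vanish and $i^*$ is an isomorphism (this also explains the degree restriction, since $H_b^0(A)\ne 0$). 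The vanishing for a single path component is the classical consequence of amenability of the fundamental group. For the possibly disconnected $A=\bigsqcup_j A_j$ one has $C_b^*(A)=\prod_j C_b^*(A_j)$ with the supremum norm and must build primitives componentwise; the one point needing care is that these be \emph{uniformly} bounded across $j$, which is provided by the invariant-mean contracting homotopy on each $C_b^*(A_j)$, of operator norm at most one.

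The decisive and harder assertion is the reverse norm estimate, and I would first record why the naive filling does not suffice. Given an absolute cocycle $b$ with $[b]=i^*\alpha$ and $\|b\|_\infty$ close to $\|i^*\alpha\|$, its restriction $b|_A$ is a coboundary, say $b|_A=\delta u_A$ with $\|u_A\|_\infty\le\|b\|_\infty$ by the norm-one homotopy above; extending $u_A$ by zero to $\tilde u\in C_b^{n-1}(X)$ and passing to $b-\delta\tilde u$ produces a relative cocycle representing $\alpha$. But $\|\delta\tilde u\|_\infty$ is controlled only by $(n+1)\|\tilde u\|_\infty$, so this argument yields at best a bi-Lipschitz comparison of the two seminorms, not their equality. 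Eliminating this loss is exactly where Gromov's theory of multicomplexes is needed.

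For the isometry I would therefore pass from $X$ to its complete singular multicomplex $\mathcal K(X)$, whose bounded cohomology is \emph{isometrically} isomorphic to that of $X$, together with the submulticomplex carrying $A$. In this combinatorial model the simplices are nondegenerate and the relevant fundamental-group actions are sufficiently well-behaved that the invariant means on the amenable groups $\pi_1(A_j)$ assemble into a single norm-nonincreasing averaging operator; the aim is to turn this into a norm-nonincreasing cochain map from the absolute to the relative complex that is homotopy-inverse to the inclusion and realizes $(i^*)^{-1}$. Applied to a nearly minimal representative of $i^*\alpha$, it returns a relative representative of $\alpha$ of no larger norm, giving $\|\alpha\|\le\|i^*\alpha\|$ and hence equality. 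I expect the construction and the norm-one estimate for this averaging/retraction operator to be the main obstacle: what is required is not the mere vanishing of $H_b^*(A)$ but a filling performed with an operator of norm exactly one, uniformly over all components of $A$ and compatibly with the simplicial structure. As a consistency check one may dualize to the homological picture, where amenability of the boundary groups should supply a norm-nonincreasing diffusion of chains off $A$.
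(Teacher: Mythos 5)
Your reduction is sound as far as it goes: the inclusion $C_b^*(X,A)\hookrightarrow C_b^*(X)$ is norm-preserving on cochains, so $\|i^*\alpha\|\le\|\alpha\|$; the long exact sequence together with $H_b^n(A)=0$ for $n\ge 1$ (with the uniformity over components that you correctly flag) gives bijectivity in degrees $\ge 2$; and you rightly diagnose that the naive ``fill $b|_A$ and subtract'' argument only yields a bi-Lipschitz comparison because of the factor $n+1$ coming from $\delta$. You also point to the correct toolbox (Gromov's multicomplexes plus averaging over amenable groups). But the decisive step --- producing the reverse inequality $\|\alpha\|\le\|i^*\alpha\|$ --- is left as a stated expectation, and the mechanism you propose (a norm-one averaging/retraction operator from the absolute to the relative complex, homotopy-inverse to the inclusion) is not what actually closes the argument; no such explicit norm-one filling is constructed in the paper either.

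The missing idea is the following. One first replaces $(X,A)$ isometrically by the pair of multicomplexes $(K(X),jK(A))$; when $\pi_1A$ does not inject into $\pi_1X$ this replacement itself is nontrivial and requires the relative mapping theorem together with a computation of $\pi_1(\lvert\hat j\widehat K(A)\rvert)$, a point your proposal does not address. One then lets Gromov's group $G=\Pi_X(A)$ (built from loops and vertex permutations supported in $A$, amenable because the $\pi_1(A_j)$ are) act simplicially on $K(X)$, and averaging over an invariant mean shows that passing to $G$-invariant bounded cochains --- equivalently, dualizing the coinvariant chain complexes $C_*^{\mathrm{simp}}\otimes_{\mathbb{Z}G}\mathbb{Z}$ --- is an isometric isomorphism for both the absolute and the relative complex. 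The punchline is then purely algebraic: for every simplex $\sigma$ of dimension $\ge 1$ in $jK(A)$ there is some $g\in G$ with $g\sigma=\overline{\sigma}$, and since simplicial chains are alternating this forces $\sigma\otimes 1=-\sigma\otimes 1=0$ in the coinvariants. Hence $C_*^{\mathrm{simp}}(jK(A))\otimes_{\mathbb{Z}G}\mathbb{Z}$ vanishes in degrees $\ge 1$ and the absolute and relative coinvariant complexes literally coincide there, so the comparison map is an isometry for trivial reasons in degrees $\ge 2$. Without this orientation-reversal/alternation trick (or an equivalent device) your outline does not produce the norm-one operator it needs, so as written the proof of the isometry is incomplete.
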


The simplicial volume of open manifolds can be described in terms of bounded cohomology. Indeed, the $\ell^1$--seminorm on $\ell^1$--homology can be expressed by the $\ell^\infty$--seminorm on bounded cohomology, and any isometric isomorphism in bounded cohomology then induces an isometric isomorphism in $\ell^1$--homology too. For more details, see \cite{Lo07}.
Such duality principle enables us to verify the following theorem.

\begin{theorem}\label{thm:1.3}
Let $V$ be a connected, oriented, compact $n$--manifold and let $M$ be the interior of $V$.
 If the inclusion $(V,\emptyset) \rightarrow (V,\partial V)$ induces an isometric isomorphism $H^n_b(V,\partial V) \rightarrow H^n_b(V)$ and $\| M\|$ is finite, then
$$\| V,\partial V \| = \| M\|.$$
If moreover, $g_M$ is a Riemannian metric on $M$ and $\| M, g_M\|_\mathrm{Lip}$ is finite, then
$$\| V,\partial V \| = \| M\| = \| M,g_M \|_\mathrm{Lip}.$$
\end{theorem}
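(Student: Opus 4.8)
The plan is to translate everything into the language of bounded cohomology and to exploit the hypothesis that $i^\ast\co H^n_b(V,\partial V)\to H^n_b(V)$ is an isometric isomorphism. I first record the two elementary inequalities $\|V,\partial V\|\le\|M\|\le\|M,g_M\|_\mathrm{Lip}$: the first is \cite[Section 1.1]{Gro82}, and the second holds because every fundamental cycle with finite Lipschitz constant is in particular a locally finite fundamental cycle, so the infimum defining $\|M,g_M\|_\mathrm{Lip}$ is taken over a smaller set. Thus it suffices to prove the reverse inequalities $\|M\|\le\|V,\partial V\|$ and, in the presence of a metric, $\|M,g_M\|_\mathrm{Lip}\le\|V,\partial V\|$.

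For the first reverse inequality I would set up two instances of the duality principle. Since $V$ is compact, a Hahn--Banach argument gives
$$\|V,\partial V\|=\sup\bigl\{\,|\langle\beta,[V,\partial V]\rangle|\;:\;\beta\in H^n_b(V,\partial V),\ \|\beta\|_\infty\le 1\,\bigr\}.$$
Because $\|M\|<\infty$ by hypothesis, the locally finite fundamental class $[M]\in H^\mathrm{lf}_n(M)$ is represented by a fundamental cycle of finite $\ell^1$--norm; the pairing of such a cycle against a bounded cocycle then converges absolutely and is independent of the chosen representatives, and the $\ell^1$--homology machinery of \cite{Lo07} yields
$$\|M\|=\sup\bigl\{\,|\langle\gamma,[M]\rangle|\;:\;\gamma\in H^n_b(M),\ \|\gamma\|_\infty\le 1\,\bigr\}.$$
Since $M=\mathrm{int}(V)$, the inclusion $M\hookrightarrow V$ is a homotopy equivalence and induces an isometric isomorphism $H^n_b(V)\cong H^n_b(M)$; composing with $i^\ast$ produces an isometric isomorphism $\Phi\co H^n_b(V,\partial V)\to H^n_b(M)$. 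The crucial compatibility to verify is that $\Phi$ intertwines the two evaluation maps, i.e.\ $\langle\beta,[V,\partial V]\rangle=\langle\Phi\beta,[M]\rangle$ for all $\beta$. I would prove this at the chain level: under the standard identification $H^\mathrm{lf}_n(M)\cong H_n(V,\partial V)$ one can choose a relative fundamental cycle of $(V,\partial V)$ and an adapted locally finite fundamental cycle of $M$ which differ only by a contribution supported in a collar of $\partial V$, so that the difference of the two pairings is the evaluation of a relative cocycle on chains lying in $\partial V$ and hence vanishes.

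Granting the compatibility, the first part is immediate: an isometric isomorphism carries the closed unit ball of $H^n_b(V,\partial V)$ bijectively onto that of $H^n_b(M)$, so as $\beta$ ranges over the former unit ball $\Phi\beta$ ranges over the latter; the two suprema above therefore agree, giving $\|V,\partial V\|=\|M\|$. Dually, this is exactly the statement that the isometric isomorphism on bounded cohomology induces an isometric isomorphism on $\ell^1$--homology matching $[M]$ with $[V,\partial V]$, as indicated in the Introduction.

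It remains to treat the Lipschitz equality, for which I must establish $\|M,g_M\|_\mathrm{Lip}\le\|V,\partial V\|$. This does not follow from duality alone, which only bounds $\|M,g_M\|_\mathrm{Lip}$ from below; instead one has to produce efficient \emph{Lipschitz} fundamental cycles. By the first part there exist locally finite fundamental cycles of $M$ of $\ell^1$--norm arbitrarily close to $\|V,\partial V\|=\|M\|$; my approach would be to smooth such a cycle to one of finite Lipschitz constant without essentially increasing its $\ell^1$--norm, matching it inside a collar of $\partial V$ to a Lipschitz model of the ends whose existence is guaranteed by $\|M,g_M\|_\mathrm{Lip}<\infty$. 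I expect this to be the main obstacle: unlike the ordinary simplicial volume, the Lipschitz simplicial volume is not formally dual to $H^n_b(M)$, so controlling the Lipschitz constants of the fundamental cycles near the noncompact ends is a genuinely metric problem that must consume the finiteness hypothesis rather than the bounded--cohomology isomorphism.
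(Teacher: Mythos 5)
Your treatment of the first equality is essentially the paper's argument: both reduce to the duality between the $\ell^1$--seminorm and bounded cohomology, together with the observation that a cocycle pulled back from $H^n_b(M,M-K)$ (for a compact core $K\subset M$) pairs with a finite-norm locally finite fundamental cycle $c$ only through $c|_K$, which represents $[M,M-K]\cong[V,\partial V]$. One caution: the expression $\langle\gamma,[M]\rangle$ is not a priori well defined, since two locally finite fundamental cycles of finite $\ell^1$--norm need not be $\ell^1$--homologous; the correct formula is $\|M\|=\inf_{\alpha\in[M]^{\ell^1}}\sup\{1/\|\varphi\|_\infty \ : \ \langle\varphi,\alpha\rangle=1\}$, and the independence of the inner supremum from the choice of $\alpha$ is precisely what the compact-support argument (Lemma \ref{lem:3.3}) establishes. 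Your ``chain-level compatibility'' is that argument in disguise, so here the issue is one of precision rather than substance.

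The genuine gap is in the Lipschitz statement. You assert that $\|M,g_M\|_\mathrm{Lip}\le\|V,\partial V\|$ ``does not follow from duality alone'' and propose instead to smooth an efficient locally finite fundamental cycle into one of finite Lipschitz constant near the ends; you then concede this is the main obstacle and do not carry it out. That construction is exactly the hard geometric step that required the additional hypotheses of L\"oh--Sauer in \cite{LS09-1}, and it is what this theorem is designed to circumvent; there is no reason to expect it to succeed under the stated hypotheses. The paper's route is again purely dual: by \cite[Section 5.3]{Lo07} one has $\|M,g_M\|_\mathrm{Lip}=\inf_{\alpha\in[M,g_M]^{\ell^1}_\mathrm{Lip}}\sup\{1/\|\varphi\|_\infty : \varphi\in H^n_b(M),\ \langle\varphi,\alpha\rangle=1\}$, where $[M,g_M]^{\ell^1}_\mathrm{Lip}\subset[M]^{\ell^1}$ is the set of $\ell^1$--classes admitting a Lipschitz finite-norm fundamental cycle. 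Since the inner supremum is independent of $\alpha\in[M]^{\ell^1}$ (again Lemma \ref{lem:3.3}) and equals $\|V,\partial V\|$, the infimum over the subset $[M,g_M]^{\ell^1}_\mathrm{Lip}$ returns the same value provided that subset is non-empty --- and non-emptiness is exactly what the hypothesis $\|M,g_M\|_\mathrm{Lip}<\infty$ supplies. No new Lipschitz cycles need to be constructed, so your ``main obstacle'' dissolves once the Lipschitz volume is expressed through the same duality.
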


Combining Theorem \ref{thm:1.3} with Theorem \ref{thm:1.2}, we have
\begin{corollary}\label{cor:1.4}
Let $V$ be a connected, oriented, compact manifold of dimension at least $2$ and let $M$ be the interior of $V$. If each path component of the boundary of $V$ has amenable fundamental group, then
$$\| V,\partial V \| = \| M\|.$$
If moreover, $g_M$ is a Riemannian metric on $M$ and $\| M, g_M\|_\mathrm{Lip}$ is finite, then
$$\| V,\partial V \| = \| M\| = \| M,g_M \|_\mathrm{Lip}.$$
\end{corollary}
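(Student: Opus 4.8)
The plan is to check that, under the amenability hypothesis, both hypotheses of Theorem~\ref{thm:1.3} are satisfied, and then to quote that theorem. First I would apply Theorem~\ref{thm:1.2} to the pair $(X,A)=(V,\partial V)$. Each path component of $\partial V$ has amenable fundamental group by assumption, so Theorem~\ref{thm:1.2} provides an isometric isomorphism $H_b^{*}(V,\partial V)\to H_b^{*}(V)$ in every degree $*\ge 2$. Since $V$ is an $n$--manifold with $n\ge 2$, the top degree $*=n$ falls into this range; hence the inclusion $(V,\emptyset)\to(V,\partial V)$ induces an isometric isomorphism $H_b^{n}(V,\partial V)\to H_b^{n}(V)$, which is exactly the bounded--cohomology hypothesis of Theorem~\ref{thm:1.3}. (If $\partial V=\emptyset$ there is nothing to prove, since then $V=M$ is closed and the statement is trivial.)

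The second hypothesis of Theorem~\ref{thm:1.3} is the finiteness of $\|M\|$, and this is the step I expect to require the most care: the corollary does not assume it, and by \cite[Theorem 6.1]{Lo07} it can genuinely fail for interiors of compact manifolds, so the amenability of $\partial V$ must be used in an essential way here. I would establish $\|M\|<\infty$ by exhibiting a locally finite fundamental cycle of $M$ with finite $\ell^{1}$--norm. Fix a finite relative fundamental cycle $c$ of $(V,\partial V)$; its boundary $\partial c$ is a fundamental cycle of the closed manifold $\partial V$. Because each component of $\partial V$ has amenable fundamental group, $\|\partial V\|=0$, so $\partial V$ admits fundamental cycles of arbitrarily small $\ell^{1}$--norm. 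Working in a collar $\partial V\times[0,\infty)$ of the end of $M$, I would choose a sequence of such representatives with norms tending to $0$ and fill the successive differences by chains $b_{k}$ supported in $\partial V\times[k,k+1]$ whose norms can be arranged to be summable; the telescoping sum $c+\sum_{k\ge 0}b_{k}$ is then a locally finite fundamental cycle of $M$ of finite norm, whence $\|M\|<\infty$.

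With both hypotheses verified, Theorem~\ref{thm:1.3} applies and gives the first equality $\|V,\partial V\|=\|M\|$. For the second statement, I would additionally assume that a Riemannian metric $g_{M}$ on $M$ satisfies $\|M,g_{M}\|_{\mathrm{Lip}}<\infty$; the second conclusion of Theorem~\ref{thm:1.3} then applies verbatim and yields $\|V,\partial V\|=\|M\|=\|M,g_{M}\|_{\mathrm{Lip}}$. The entire argument is thus a substitution of Theorem~\ref{thm:1.2} into Theorem~\ref{thm:1.3}, the only genuine obstacle being the finiteness reduction of the preceding paragraph, where the amenability of the boundary components is indispensable.
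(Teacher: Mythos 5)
Your overall route is the same as the paper's: feed Theorem \ref{thm:1.2} (applied to the pair $(V,\partial V)$ in the top degree $n\ge 2$) into Theorem \ref{thm:1.3}, after checking that $\|M\|<\infty$. The only divergence is how the finiteness of $\|M\|$ is obtained. The paper simply invokes L\"oh's finiteness criterion \cite[Theorem 6.1]{Lo07} --- which is an ``if and only if'': $\|M\|<\infty$ precisely when $\|\partial V\|=0$ --- and $\|\partial V\|=0$ holds here because each boundary component has amenable fundamental group. You instead sketch a proof of the nontrivial direction of that criterion by a telescope in a collar $\partial V\times[0,\infty)$.

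In that sketch there is one genuine gap: the assertion that the filling chains $b_k$ with $\partial b_k=z_{k+1}-z_k$ ``can be arranged to be summable.'' For an arbitrary space the filling norm of a boundary is \emph{not} controlled by the $\ell^1$--norm of that boundary, so choosing $\|z_k\|_1\le 2^{-k}$ does not by itself give any bound on $\sum_k\|b_k\|_1$; this is exactly the delicate point in L\"oh's proof, not a routine arrangement. In the present situation the step can be repaired, because amenability of $\pi_1$ of each component of $\partial V$ forces $H^*_b(\partial V)=0$ in positive degrees, and hence (by the Matsumoto--Morita criterion) the singular chain complex of $\partial V$ satisfies the uniform boundary condition, so one may choose $\|b_k\|_1\le K\left(\|z_k\|_1+\|z_{k+1}\|_1\right)$ for a constant $K$ independent of $k$; alternatively, and more economically, you can simply cite the ``if'' direction of \cite[Theorem 6.1]{Lo07}, as the paper does. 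With that one point either justified or replaced by the citation, your argument is complete and agrees with the paper's.
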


A complete pinched negatively curved manifold $M$ of finite volume is tame, i.e., homeomorphic to the interior of a compact manifold $V$. Furthermore, it is well known that every boundary component of $V$ has a virtually nilpotent fundamental group which injects into $\pi_1V$. Hence, we have the following corollary immediately.
\begin{corollary}\label{cor:1.5}
Let $(M,g_M)$ be a connected, oriented, complete, pinched negatively curved manifold of finite volume that is homeomorphic to the interior of a compact manifold $V$. Then, $$\| V, \partial V \| = \| M \| = \| M,g_M\|_\mathrm{Lip}.$$
\end{corollary}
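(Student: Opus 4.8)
The plan is to obtain Corollary~\ref{cor:1.5} as a direct application of Corollary~\ref{cor:1.4}: the homological mechanism is already in place, so the task is only to check, for a complete, finite-volume, pinched negatively curved manifold $(M,g_M)$, that the hypotheses of Corollary~\ref{cor:1.4} are met. I would split this into a topological/group-theoretic verification, which is routine, and the finiteness of the simplicial and geometric simplicial volumes, where the real difficulty lies.

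First I would settle the topological part. By the Margulis lemma and the thick--thin decomposition for pinched negatively curved manifolds, $M$ has finitely many ends, each a cusp diffeomorphic to $W_i\times[0,\infty)$ whose cross-section $W_i$ is a closed infranilmanifold; in particular $M$ is tame, homeomorphic to the interior of a compact manifold $V$ with $\partial V=\bigsqcup_i W_i$, as recorded in the statement. Negative curvature forces $\dim M\ge 2$, and $V$ inherits connectedness and orientability from $M$, so $V$ is a connected, oriented, compact manifold of dimension at least $2$. Each $\pi_1 W_i$ is virtually nilpotent and therefore amenable, since nilpotent groups are amenable and a group that contains an amenable subgroup of finite index is itself amenable. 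Hence every path component of $\partial V$ has amenable fundamental group, which is the standing hypothesis of Corollary~\ref{cor:1.4} and, via Theorem~\ref{thm:1.2}, supplies the isometric isomorphism $H^n_b(V,\partial V)\to H^n_b(V)$.

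It remains to address finiteness. That $\|M\|<\infty$ is classical, going back to Gromov \cite{Gro82}: the cusps contribute a summable amount once one spreads the (efficient, because amenable) fundamental cycles of the cross-sections $W_i$ along the slabs $W_i\times[k,k+1]$ using degree-$\ge 2$ self-maps of the infranilmanifolds, whose geometrically decaying coefficients make the total $\ell^1$-norm converge; no metric control is needed here. With $\|M\|<\infty$ in hand, Corollary~\ref{cor:1.4} already gives the first equality $\|V,\partial V\|=\|M\|$. The genuinely delicate point, which I expect to be the main obstacle, is the finiteness of $\|M,g_M\|_\mathrm{Lip}$: one now needs a locally finite fundamental cycle whose simplices additionally have uniformly bounded Lipschitz constant. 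Over the compact thick part, bounded curvature and a positive lower bound on the injectivity radius allow one to straighten a fine triangulation to geodesic simplices of controlled Lipschitz constant and norm comparable to the volume. Over the cusps the metric of $W_i\times\{t\}$ shrinks as $t\to\infty$, so a fixed-size triangulation of each slab would again give a non-summable norm; one must decrease the number of simplices along the cusp while keeping their Lipschitz constants bounded, balancing the geometric shrinking of the cross-section against the combinatorial growth of the triangulation. This is precisely the geometric input underlying the finite-volume proportionality principle, and rather than reprove it I would invoke the finiteness of $\|M,g_M\|_\mathrm{Lip}$ from \cite{Gro82} and \cite{LS09-2}.

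With the amenability of $\partial V$ and the finiteness of both $\|M\|$ and $\|M,g_M\|_\mathrm{Lip}$ established, the full hypothesis set of Corollary~\ref{cor:1.4} holds, and that corollary yields $\|V,\partial V\|=\|M\|=\|M,g_M\|_\mathrm{Lip}$, which is the assertion of Corollary~\ref{cor:1.5}.
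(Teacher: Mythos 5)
Your proposal is correct and follows essentially the same route as the paper: reduce to Corollary~\ref{cor:1.4} by noting that the cusp cross-sections have virtually nilpotent, hence amenable, fundamental groups, and then supply the only missing hypothesis, the finiteness of $\| M,g_M\|_\mathrm{Lip}$, by citing Gromov's bound $\| N,g_N\|_\mathrm{Lip}\le C_n\cdot\mathrm{Vol}(N,g_N)$ for complete manifolds with two-sided curvature bounds. The extra discussion of $\| M\|<\infty$ is harmless but unnecessary, since that finiteness is already built into Corollary~\ref{cor:1.4} via L\"oh's finiteness criterion.
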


Now, we establish the proportionality principle for the ordinary simplicial volume of complete pinched negatively curved manifolds of finite volume.
\begin{corollary}\label{cor:1.6}
Let $(M,g_M)$ and $(N,g_N)$ be connected, complete, pinched negatively curved manifolds of finite volume whose universal covers are isometric. Then
$$\frac{\| M \|}{\mathrm{Vol}(M,g_M)} = \frac{\| N \|}{\mathrm{Vol}(N,g_N)}.$$
\end{corollary}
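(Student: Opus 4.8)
The plan is to route the ordinary simplicial volume through the geometric (Lipschitz) simplicial volume, for which the proportionality principle is already available. As recalled in the introduction, the geometric simplicial volume of a complete, finite-volume Riemannian manifold obeys Gromov's proportionality principle (\cite{Gro82}, with a detailed proof in \cite{LS09-2}): the ratio $\|\cdot,g\|_{\mathrm{Lip}}/\mathrm{Vol}$ depends only on the isometry type of the universal cover. The ordinary simplicial volume of an open manifold does not share this property in general, so the whole argument hinges on being able to replace $\|M\|$ by $\|M,g_M\|_{\mathrm{Lip}}$ --- which is exactly what Corollary~\ref{cor:1.5} provides.

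First I would verify the hypotheses of Corollary~\ref{cor:1.5}. A complete, finite-volume, pinched negatively curved manifold is tame, homeomorphic to the interior of a compact $V$ whose boundary components have virtually nilpotent, hence amenable, fundamental groups, as recorded just before Corollary~\ref{cor:1.5}. If $M$ or $N$ is non-orientable, I would first pass to the orientation double covers $\widehat M\to M$ and $\widehat N\to N$; these remain complete, finite-volume and pinched negatively curved, their universal covers agree with those of $M$ and $N$ and hence with each other, and since both the simplicial volume and the Riemannian volume are multiplicative under finite coverings the proportionality ratio is unchanged. It therefore suffices to treat the orientable case, in which Corollary~\ref{cor:1.5} applies directly and gives
$$\| M\| = \| M,g_M\|_{\mathrm{Lip}}, \qquad \| N\| = \| N,g_N\|_{\mathrm{Lip}}.$$

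With these identities the conclusion follows at once: since $M$ and $N$ have isometric universal covers, the geometric proportionality principle yields
$$\frac{\| M,g_M\|_{\mathrm{Lip}}}{\mathrm{Vol}(M,g_M)} = \frac{\| N,g_N\|_{\mathrm{Lip}}}{\mathrm{Vol}(N,g_N)},$$
and substituting $\|M,g_M\|_{\mathrm{Lip}}=\|M\|$ and $\|N,g_N\|_{\mathrm{Lip}}=\|N\|$ gives the claimed equality. I do not expect the final assembly to present any real difficulty; essentially all of the mathematical content is concentrated in Corollary~\ref{cor:1.5} (and behind it Theorems~\ref{thm:1.2} and~\ref{thm:1.3}), whose function is precisely to bridge the topological invariant $\|M\|$ and the metric invariant $\|M,g_M\|_{\mathrm{Lip}}$. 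The two places that genuinely require care are the finiteness of $\|M,g_M\|_{\mathrm{Lip}}$ for pinched negatively curved finite-volume manifolds, needed for Corollary~\ref{cor:1.5} to be applicable, and quoting the geometric proportionality principle in the open rather than closed setting, for which \cite{LS09-2} is the appropriate reference.
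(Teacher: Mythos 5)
Your proposal is correct and follows essentially the same route as the paper: apply Corollary~\ref{cor:1.5} to identify $\|M\|$ with $\|M,g_M\|_{\mathrm{Lip}}$ (using Gromov's volume bound to guarantee finiteness of the Lipschitz simplicial volume), then invoke the L\"oh--Sauer proportionality principle (Theorem~\ref{thm:3.2}) for the geometric simplicial volume of complete nonpositively curved finite-volume manifolds. Your extra care with the non-orientable case via orientation double covers is a detail the paper leaves implicit but is handled correctly.
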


L\"{o}h and Sauer \cite{LS09-2} established the proportionality principle
for the geometric simplicial volume of complete, nonpositively curved Riemannian manifolds. Corollary \ref{cor:1.6} follows from this
proportionality principle and Corollary \ref{cor:1.5}.

\section{Bounded cohomology}

In this section we will prove Theorem \ref{thm:1.2}. (An independent proof of this result for pairs of countable CW complexes\footnotemark\footnotetext[2]{\cite{BBIPP} builds on \cite{Iva87} which states its results for countable CW complexes only. The proofs in \cite{Iva87} can be applied verbatim to spaces possessing a universal covering space (i.e., spaces which are locally path connected and semi-locally simply connected) but beyond that the methods do not seem to apply. Thus the arguments of \cite{BBIPP} should extend to pairs of topological spaces $(X,Y)$ with $X$ possessing a universal cover.} is also given by Bucher-Burger-Frigerio-Iozzi-Pagliantini-Pozzetti in \cite[Theorem 1]{BBIPP}. That paper also gives some further applications e.g.\ to generalized Dehn fillings or for a direct proof of Gromov's equivalence theorem.)
Our proof works for arbitrary topological spaces and it uses Gromov's theory of multicomplexes.

Though we think that for reading the proof in Section \ref{sec:2.3} it should be sufficient to use the results about multicomplexes as black boxes, for the convenience of the reader we will recall all the
relevant definitions in Section \ref{sec:2.1}. All of Section \ref{sec:2.1} is taken almost literally from \cite{Ku} and is originally due to \cite{Gro82}.

In Section \ref{sec:2.2}. we will prove a lemma which in this generality is necessary only for handling the case that $\pi_1A$ does not inject into $\pi_1X$.

A reader who is only interested in the proof of Theorem \ref{thm:1.2} for $\pi_1$--injective subsets (which would actually be sufficient for the applications in Corollaries \ref{cor:1.4} and \ref{cor:1.5}) might skip Section \ref{sec:2.2} and go directly to the proof of Theorem \ref{thm:1.2} in Section \ref{sec:2.3}.

\subsection{Preliminaries about Multicomplexes}\label{sec:2.1}
Multicomplexes are a tool used in Section $3$ of \cite{Gro82}
to investigate the bounded cohomology of topological spaces.
(An alternative route to bounded cohomology is Ivanov's group-cohomological approach in \cite{Iva87}, which however seems to work only for the slightly restricted class of locally path-connected, semi-locally simply connected topological spaces.) Multicomplexes satisfy all axioms of
simplicial complexes except that (other than in simplicial complexes) in a multicomplex there may be more than one $n$--simplex with a given $n-1$--skeleton. More precisely, let $V$ be a set, then a {\em multicomplex} $K=(V,S,\left\{\partial_i\right\}_{i\in\mathbb N})$ with vertices $V$ is given by the following data:
\begin{itemize}
\item[-] a (possibly empty) set $S_{v_0,\ldots,v_n}$ for each $n\in\mathbb N$ and each ordered subset $(v_0,\ldots,v_n)\in V^{n+1}$ (the "$n$--simplices with vertices $v_0,\ldots,v_n$"),\\
\item[-] a map $\partial_i:S_{v_0,\ldots,v_n}\rightarrow S_{v_0,\ldots,\hat{v}_i,\ldots,v_n}$ for each $n\in\mathbb N$, each $i\in\left\{0,1,\ldots,n\right\}$ and each $(v_0,\ldots,v_n)\in V^{n+1}$.
\end{itemize}

For $n\in\mathbb N$ we will denote by $K_n$ the set of $n$--simplices of a multicomplex $K$, that is the union of all $S_{v_0,\ldots,v_n}$ over $(v_0,\ldots,v_n)\in V^{n+1}$.

A {\em simplicial map} from a multicomplex $K=(V_K,S,\left\{\partial_i\right\})$ to a multicomplex $K^\prime=(V_{K^\prime},S^\prime,{\{\partial^\prime_i\}})$ is given by a map $f:V_K\rightarrow V_{K^\prime}$ and a map $F:S_{v_0,\ldots,v_n}\rightarrow S^\prime_{f(v_0),\ldots,f(v_n)}$ for each $(v_0,\ldots,v_n)\in V_K^{n+1}, n\in\mathbb N$, such that $\partial_i^\prime F=F\partial_i$ for all $i\in\left\{0,1,\ldots,n\right\}$.

A {\em subcomplex} $K^\prime\subset K$ of a multicomplex $K=(V_K,S,\left\{\partial_i\right\})$ is a multicomplex $K^\prime=(V_{K^\prime},S^\prime,{\{\partial^\prime_i\}})$ with $V_{K^{\prime}}\subset V_K$, $S^\prime_{v_0,\ldots,v_n}\subset S_{v_0,\ldots,v_n}$ for all $(v_0,\ldots,v_n)\in V_{K^\prime}^{n+1}$, $n\in\mathbb N$, and $\partial_i^\prime:S^\prime_{v_0,\ldots,v_n}\rightarrow S^\prime_{v_0,\ldots,\hat{v}_i,\ldots,v_n}$ the restriction of $\partial_i:S_{v_0,\ldots,v_n}\rightarrow S_{v_0,\ldots,\hat{v}_i,\ldots,v_n}$ to $S^\prime_{v_0,\ldots,v_n}$ for all $(v_0,\ldots,v_n)\in V_{K^\prime}^{n+1}$, $n\in\mathbb N$, $i\in\left\{0,1,\ldots,n\right\}$.

The geometric realisation $\mid K\mid$ of a multicomplex $K=(V,S,{\{\partial_i\}})$ is $$\mid K\mid=\bigcup_{n\in \mathbb N}\bigcup_{(v_0,\ldots,v_n)\in V^{n+1}}S_{v_0,\ldots,v_n}\times\Delta^n/\sim,$$
where $\Delta^n$ denotes the standard $n$--simplex and the equivalence relation
$\sim$ is
generated by the relations $$(\partial_i\sigma,x)\sim(\sigma,j_i(x))$$
for all $n\in\mathbb N,(v_0,\ldots,v_n)\in V^{n+1},\sigma\in S_{v_0,\ldots,v_n},i\in\left\{0,\ldots,n\right\},x\in\Delta^{n-1}$, where $j_i:\Delta^{n-1}\rightarrow\Delta^n$ denotes the standard inclusion as $i$-th face.


For example the complex $\widehat{S}_*^{sing}\left(X\right)$ of singular simplices (with distinct vertices) in a given topological space $X$ is a multicomplex $K$ with $V=X$.

In this paper we will use the notation from \cite[Section 1]{Ku}, which is actually a collection of
the concepts and results from \cite[Section 3]{Gro82}. For the convenience of the reader we are now going to explain
shortly those facts about the bounded cohomology of multicomplexes which we will use in the proof of Theorem \ref{thm:2.2}. For more details we refer to \cite{Ku} or the original \cite{Gro82}.

A multicomplex $K$
is {\em minimally complete} if each singular simplex $$\sigma \co \Delta^n\rightarrow \mid K\mid$$ whose restriction to the boundary $f |_{\partial\Delta^n}:\partial\Delta^n\rightarrow\mid K\mid$ is a simplicial embedding, is homotopic rel.\ $\partial\Delta^n$ to a unique simplicial embedding.
$K$ is {\em aspherical} if simplices in $K$ are uniquely determined by their $1$--skeleton.

To each topological space $X$ one associates a
minimally complete multicomplex $$\widehat{K}\left(X\right)\subset \widehat{S}_*^{sing}\left(X\right)$$ as follows.
The vertices of $\widehat{K}\left(X\right)$ are the points of $X$, i.e., $\widehat{K}_0\left(X\right)=\hat{S}_0^{sing}(X)=X$. In each homotopy class
rel.\ $\partial\Delta^1$ of singular $1$--simplices $\Delta^1\rightarrow X$ (with two distinct vertices) we choose one $1$--simplex
to belong to $\widehat{K}_1\left(X\right)$. Inductively for $n\ge 2$ in
each homotopy class
rel.\ $\partial\Delta^n$ of singular $n$--simplices $\sigma \co \Delta^n\rightarrow X$ with $\partial_0\sigma,\ldots,\partial_n\sigma$ belonging to $\widehat{K}_{n-1}\left(X\right)$,
we choose one $n$--simplex
to belong to $\widehat{K}_n\left(X\right)$. We can and will succesively choose the $n$--simplices for $n=1,2,\ldots$ such that $\sigma\in\widehat{K}_n(X)\Leftrightarrow \sigma f_\pi\subset\widehat{K}_n(X)$ for any permutation $\pi\in S_{n+1}$, where $f_\pi:\Delta^n\rightarrow\Delta^n$ is the unique affine homeomorphism given by the permutation $\pi$ of the vertices of $\Delta^n$.

Next, an aspherical, minimally complete multicomplex $K\left(X\right)$ is defined as the quotient
of $\widehat{K}\left(X\right)$ under the
equivalence relation that identifies two simplices
whenever they have the same $1$--skeleton. Denoting $\Gamma_i$ the group of automorphisms fixing the $i$--skeleton we have $$K\left(X\right)=\Gamma_1\backslash \widehat{K}\left(X\right).$$
In \cite[Section 3.3, Corollary D]{Gro82} it is proved that $\Gamma_1/\Gamma_{i+1}$ is solvable and (hence) $\widehat{K}\left(X
\right)\rightarrow K\left(X\right)$ induces an isometric isomorphism of bounded cohomology $H_b^i$ for all $i$.

If $A\subset X$ is $\pi_1$--injective for each path component, then $K\left(A\right)$ is naturally a submulticomplex of $K\left(X\right)$. In general, if $A\subset X$ is not necessarily $\pi_1$--injective, then one has (non-injective) simplicial maps $$\hat{j} \co
\widehat{K}\left(A\right)\rightarrow \widehat{K}\left(X\right), \
j \co K\left(A\right)\rightarrow K\left(X\right),$$
see \cite[Section 1.3]{Ku}, and one considers the submulticomplex $L:=jK\left(A\right)\subset K\left(X\right)$. 
The argument of \cite[Section 3.3, Corollary D]{Gro82} clearly extends to the relative case thus for all $i$ one has an isometric isomorphism $$H_b^i\left(K\left(X\right),jK\left(A\right)\right)\rightarrow H_b^i
\left(\widehat{K}\left(X\right),\hat{j}\widehat{X}\left(A\right)\right).$$

{\bf Definition of $\Pi_X\left(A\right)$.} We recollect the definitions from \cite[Section 1.5.1]{Ku}, originally due to \cite{Gro82}. Let $\left(X,A\right)$ be a pair of spaces and $$L:=jK\left(A\right)\subset K\left(X\right).$$
We denote by $L_1$ the $1$--skeleton of $L$, by $\Omega L$ the
set of homotopy classes rel.\ $\left\{0,1\right\}$
of continuous maps $\gamma \co \left[0,1\right]\rightarrow \mid L\mid$ with $\gamma\left(0\right)=\gamma\left(1\right)$, and by $\Omega^*L\subset\Omega L$ the subset of nontrivial homotopy classes rel.\ $\left\{0,1\right\}$.
For $x\in A$ we denote by $c_x$ the constant loop based at $x$. Define $$
\Pi_X\left(A\right):=\left\{\begin{array}{c}
\gamma:A\rightarrow L_1\cup\Omega L: \\
\gamma(x)=c_x\ \mbox{for all but finitely many}\ x\in A,\\
\gamma(x)(0)=x\ \forall x, \ \gamma(x)(1)\not=\gamma(y)(1)\ \forall x\not=y,\\
\forall z\in A \ \exists x\in A\ \mbox{with}\ \gamma(x)(1)=z.
\end{array}\right\}$$
$$=\left\{\begin{array}{c}\left\{\gamma_1,\ldots,\gamma_n
\right\}
:n\in \mathbb N,
\gamma_1,\ldots,\gamma_n\in L_1\cup \Omega^* L,\\
\gamma_i\left(0\right)\not=\gamma_j\left(0\right), \ \gamma_i\left(1\right)\not=\gamma_j\left(1\right)\mbox{\ for\ }i\not= j,\\
\left\{\gamma_1\left(
0\right),\ldots,\gamma_n\left(0\right)\right\}=\left\{
\gamma_1\left(1\right),\ldots,\gamma_n\left(1\right)\right\}.
\end{array}\right\}.$$

The multiplication in $\Pi_X\left(A\right)$ is defined as follows:
given $\left\{\gamma_1,\ldots,\gamma_m\right\}$
and $\left\{\gamma_1^\prime,\ldots,\gamma_n^\prime\right\}$,
we choose a reindexing of the unordered sets $\left\{\gamma_1,\ldots,\gamma_m\right\}$
and $\left\{\gamma_1^\prime,\ldots,\gamma_n^\prime\right\}$
such that we have: $$\gamma_j\left(1\right)
=\gamma_j^\prime\left(0\right)$$ for $1\le j\le i$ and $$\gamma_j\left(1\right)
\not=\gamma_k^\prime\left(0\right)$$ for all pairs $\left(j,k\right)$ with
$j\ge i+1, k\ge i+1$.
(Since we are assuming that all $\gamma_j\left(1\right)$ are pairwise distinct, and also all
$\gamma_j^\prime\left(0\right)$ are pairwise distinct, such a reindexing exists for some $i\ge0$, and
it is unique up to permuting the indices $\le i$ and permuting separately the
indices of the $\gamma_j$ with $j\ge i+1$ and of the $\gamma_k^\prime$ with $k\ge i+1$.) \\
Moreover we permute the indices $\left\{1,\ldots,i\right\}$ such that there exists some $h$ with $0\le h\le i$ satisfying the following conditions:

\begin{itemize}
\item[-] for $1\le
j\le h$ we have either $\gamma_j^\prime\not=\overline{\gamma_j}\in L_1$ or $\gamma_j^\prime
\not=\gamma_j^{-1}\in\Omega^*L.$
\item[-] for $h<
j\le i$ we have either $\gamma_j^\prime=\overline{\gamma_j}\in L_1$ or $\gamma_j^\prime                                                    =\gamma_j^{-1}\in\Omega^*L,$
\end{itemize}
where $\overline{\gamma_j}$ denotes the unique $1$--simplex in the relative homotopy class of $t\rightarrow\gamma_j\left(1-t\right)$.

With this fixed reindexing we define
$$\left\{\gamma_1,\ldots,\gamma_m\right\}
\left\{\gamma_1^\prime,\ldots,\gamma_n^\prime\right\}:=
\left\{\gamma_1^\prime*\gamma_1,\ldots,\gamma_h^\prime*\gamma_h,\gamma_{i+1},\ldots,
\gamma_m,\gamma_{i+1}^\prime,
\ldots,\gamma_n^\prime\right\},$$
where $\gamma_j^\prime*\gamma_j$ denotes the unique element of $L_1\cup\Omega^*L$ homotopic rel.\ boundary to the path $l:\left[0,1\right]\rightarrow \mid L\mid$ defined by $l(t)=\gamma_j^\prime(2t)$ for $t\le\frac{1}{2}$ and $l(t)=\gamma_j(2t-1)$ for $t\ge\frac{1}{2}$.
Here we use the convention that $\gamma_j*\overline{\gamma_j}$ is 'empty'. (That means in this case the product is just avoided from the set on the right hand side.)\footnotemark\footnotetext[3]{Equivalently, but perhaps less intuitively, the multiplication is defined - using the first definition of $\Pi_X(A)$ - by $\gamma\gamma^\prime(x)=\gamma^\prime(x)*\gamma(\gamma^\prime(x)(1))$ for all $x\in A$.}

In particular the empty set is the neutral element of $\Pi_X\left(A\right)$ and the inverse of $\left\{\gamma_1,\ldots,\gamma_n\right\}$ is $\left\{\overline{\gamma_1},\ldots,\overline{\gamma_n}\right\}$.
We remark that $\gamma\gamma^\prime$ indeed belongs
to $\Pi_X\left(A\right)$ because $$\left\{ \gamma_1^\prime*\gamma_1\left(0\right),
\ldots,\gamma_h^\prime*\gamma_h\left(0\right),\gamma_{i+1}\left(0\right),\ldots,
\gamma_m\left(0\right),\gamma_{i+1}^\prime\left(0\right),
\ldots,\gamma_n^\prime\left(0\right)\right\}$$
$$=\left\{ \gamma_1^\prime*
\gamma_1\left(1\right),\ldots,\gamma_h^\prime*\gamma_h\left(1\right), \gamma_{i+1}\left(1\right),\ldots,
\gamma_m\left(1\right),\gamma_{i+1}^\prime\left(1\right),
\ldots,\gamma_n^\prime\left(1\right)\right\}.$$ This equality holds because the maps $\left(
\gamma_1^\prime\left(0\right),\ldots,\gamma_n^\prime\left(0\right)\right)\rightarrow
\left(\gamma_1^\prime\left(1\right),\ldots,\gamma_n^\prime\left(1\right)\right)$ and
$\left(\gamma_1\left(0\right),\ldots,\gamma_m\left(0\right)\right)\rightarrow\left(\gamma_1\left(1\right),\ldots,\gamma_m\left(1\right)\right)$ can be considered as permutations of $L_0=A$ keeping all but finitely many vertices fixed and hence the composition of the two permutations (as well as the composition of their inverses) will again be a permutation, in particular the composition and its inverse will both be injective.

It is well known that concatenation of paths defines an associative operation on the set of homotopy classes of paths rel.\ boundary. (Though concatenation is not associative on the set of paths.) Therefore $*$ is an associative operation on (a subset of) $L_1\cup\Omega L$. This implies associativity of the multiplication in $\Pi_X(A)$ because (using the first definition of $\Pi_X(A)$ to keep notation simpler):
{\setlength\arraycolsep{2pt}
\begin{eqnarray*}
(\gamma\gamma^\prime)\gamma^{\prime\prime}(x)&=&\gamma^{\prime\prime}(x)*\gamma^\prime(\gamma^{\prime\prime}(x)(1))*\gamma(\gamma^\prime(\gamma^{\prime\prime}(x)(1))(1)) \\
&=&\gamma^{\prime\prime}(x)*\gamma^\prime(\gamma^{\prime\prime}(x)(1))*\gamma(\gamma^{\prime\prime}(x)*\gamma^\prime(\gamma^{\prime\prime}(x)(1))(1))=\gamma(\gamma^\prime\gamma^{\prime\prime})(x)
\end{eqnarray*}}
for all $x\in A$. Thus we have turned $\Pi_X(A)$ into a group.\\

{\bf Action of $\Pi_X\left(A\right)$ on $K\left(X\right)$.} There is an
inclusion $$\Pi_X\left(A\right)\subset map_0\left(L_0,\left[\left[0,1\right],\mid L\mid\right]_{\mid K\mid}\right),$$
where $\left[\left[0,1\right],\mid L\mid\right]_{\mid K\mid}$ is the set of homotopy
classes (in $\mid K\mid:=\mid K\left(X\right)\mid$) rel.\ $\left\{0,1\right\}$ of maps from $\left[0,1\right]$ to $\mid L\mid$,
and $map_0\left(L_0,\left[\left[0,1\right],\mid L\mid\right]_{\mid K\mid}\right)$
is the set of maps $f \co L_0\rightarrow \left[\left[0,1\right],\mid L\mid\right]_{\mid K\mid}$ with
$f\left(y\right)\left(0\right)=y$ for all $y\in L_0$ and
$f\left(.\right)\left(1\right) \co L_0\rightarrow L_0$ is a bijection.
This inclusion is given by sending $$\left\{\gamma_1,\ldots,\gamma_n\right\}$$
to the map $f$ defined by $$f\left(\gamma_i\left(0\right)\right)=\left[\gamma_i\right]$$ for $i=1,\ldots,n$, and $$f\left(y\right)=\left[c_y\right]$$
(the constant path) for $y\not\in \left\{\gamma_1\left(0\right),\ldots,\gamma_n\left(0\right)\right\}$.

The inclusion is a homomorphism with respect to the multiplication defined on
$map_0\left(L_0,\left[\left[0,1\right],\mid L\mid\right]_{\mid K\mid}\right)$
by $\left[gf\left(y\right)\right]:=\left[f\left(y\right)\right]*\left[g\left(f\left(y\right)\left(1\right)\right)\right],$
where again $*$ denotes the well-defined concatenation of homotopy classes of paths rel.\ boundary.
An action of $$map_0\left(L_0,\left[\left[0,1\right],\mid L\mid\right]_K\right)$$ on $K$ is defined on the $0$--skeleton by $$gy=g\left(y\right)\left(1\right)$$ for $y\in L_0$ and $g\in map_0\left(L_0,\left[\left(0,1\right),\mid L\mid\right]_K\right)$, and by
$gx=x$ for $x\in K_0-L_0$.
To define the action on the $1$--skeleton of $K$ recall that, by minimal completeness of $K$, $1$--simplices $\sigma$ are in 1-1-correspondence with homotopy classes (rel.\ $\left\{0,1\right\}$)
of (nonclosed) singular $1$--simplices in $\mid K\mid$ with distinct vertices in $K_0$.

Using this correspondence,
define, for $\sigma\in K_1$, $g\sigma$ to be the unique $1$--simplex in the homotopy class
(rel.\ $\left\{0,1\right\}$) of $$\overline{g\left(\sigma\left(0\right)\right)}*\sigma*g\left(\sigma\left(1\right)\right).$$
The so defined $g\sigma$ is indeed an element of $K_1$
because $\overline{g\left(\sigma\left(0\right)\right)}*\sigma*g\left(\sigma\left(
1\right)\right)$ is a singular $1$--simplex with distinct vertices.
Indeed, if both vertices of $g\sigma$ agreed, then we would have
$g\left(\sigma\left(0\right)\right)\left(1\right)=g\left(\sigma\left(1\right)\right)\left(1\right)$.
But, since $g\left(\cdot \right)\left(1\right)$ is a bijection, this would contradict $\sigma\left(0\right)\not=\sigma\left(1\right)$.

We observe that the definition implies $g\overline{\sigma}=\overline{g\sigma}$ for all $g\in \pi_X(A),\sigma\in K_1$.
One checks easily that $\left(gf\right)\left(\sigma\right)=g\left(f\left(\sigma\right)\right)$ for all $$g,f\in map_0\left(L_0,\left[\left[0,1\right],\mid L\mid\right]_{\mid K\mid}\right), \ \sigma\in K_1.$$
Thus we defined an action of $map_0\left(L_0,\left[\left[0,1\right],\mid L\mid\right]_{\mid K\mid}\right)$ on $K_1$.

Observe that for a simplex
$\sigma\in K$ with $1$--skeleton $\sigma_1$, and $$g\in map_0\left(L_0,\left[\left[0,1\right],\mid L\mid\right]_{\mid K\mid}\right),$$
there exists {\em some} simplex in $K$ with $1$--skeleton $g\sigma_1$. Since $K$ is aspherical, this allows a unique extension of the group action from $K_1$ to $K$, see \cite[Section 1.5.1]{Ku}. Hence the action of $map_0\left(L_0,\left[\left[0,1\right],\mid L\mid\right]_K\right)$
extends (uniquely) to a simplicial action on $K$. In particular this defines an action of $\Pi_X\left(A\right)$ on $K$.

Since $g\overline{\sigma}=\overline{g\sigma}$ on the $1$--skeleton and since we assumed that $\sigma\in\widehat{K}(X)$ implies $\sigma\circ f_\pi\in\widehat{K}(X)$ for all permutations of vertices $\pi$, the construction implies $g(\sigma\circ f_\pi)=(g\sigma)\circ f_\pi$ for all permutations of vertices, in particular $g\overline{\sigma}=\overline{g\sigma}$ for all $g\in\Pi_X(A),\sigma\in \widehat{K}(X)$.

\subsection{An application of the relative mapping theorem.}\label{sec:2.2}
We stick to the notation of Section \ref{sec:2.1}. If each path component of
$A$ is a $\pi_1$--injective subspace of $X$, then $K\left(A\right)$
is a submulticomplex of $K\left(X\right)$ and by \cite[Proposition 3]{Ku}
one has a natural isometric isomorphism $$I^* \co H_b^*\left(K\left(X\right),K\left(A\right)\right)\rightarrow H_b^*\left(X,A\right).$$
Here the relative bounded cohomology of a pair of multicomplexes is by definition the simplicial bounded cohomology, that is the cohomology computed from the dual of the $\ell^1$--completion of the relative simplicial chain complex.

If the assumption on $\pi_1$--injectivity is not satisfied, then one has the submulticomplex $jK\left(A\right)\subset K\left(X\right)$ but in general it is not true that $H_b^*\left(K\left(X\right),jK\left(A\right)\right)$ is isomorphic to $H_b^*\left(X,A\right)$. However we are going to prove (as a consequence of Gromov's Relative Mapping Theorem) that one has an isometric isomorphism if all components of $A$ have amenable fundamental group.

As a preparation we need the following lemma.
\begin{lemma}\label{lem:fundamentalgroup}Let $(X,A)$ be a pair of topological spaces. Then
$$\pi_1(\mid\hat{j}\widehat{K}(A)\mid,x)\cong im(\pi_1(A,x)\rightarrow \pi_1(X,x))$$
for all $x\in A$ and the homomorphism of fundamental groups induced by the continuous mapping $\mid S_*^{sing}(A)\mid\rightarrow \mid \hat{j}\widehat{K}(A)\mid$ equals the homomorphism from $\pi_1(A,x)$ to $im(\pi_1(A,x)\rightarrow\pi_1(X,x)$.\end{lemma}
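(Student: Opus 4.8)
\emph{The plan} is to work entirely with the multicomplexes and to compute $\pi_1(\lvert L\rvert ,x)$, where $L:=\hat j\widehat K(A)\subset\widehat K(X)$, as a quotient of $\pi_1(A,x)$. I would first factor the realization of $\hat j\co\widehat K(A)\to\widehat K(X)$ as
$$\lvert\widehat K(A)\rvert\xrightarrow{\ q\ }\lvert L\rvert\xrightarrow{\ \iota\ }\lvert\widehat K(X)\rvert,$$
with $q$ the corestriction onto the image and $\iota$ the inclusion of the submulticomplex. Throughout I would use that the tautological evaluation maps $\lvert\widehat K(A)\rvert\to A$ and $\lvert\widehat K(X)\rvert\to X$ are $\pi_1$--isomorphisms and that $\pi_1$ of a connected multicomplex is generated by edge--loops (from \cite{Gro82,Ku}). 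Under these identifications the square relating $\hat j$ to the inclusion $A\hookrightarrow X$ commutes, so $\iota_*q_*=\lvert\hat j\rvert_*$ is exactly the map $\phi\co\pi_1(A,x)\to\pi_1(X,x)$ induced by $A\hookrightarrow X$. Writing $N:=\ker\phi$, the lemma reduces to the two claims that $q_*$ is surjective and that $\ker q_*=N$, since then $q_*$ induces the isomorphism $\pi_1(\lvert L\rvert ,x)\cong\pi_1(A,x)/N=\operatorname{im}\phi$.

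Next I would dispatch the easy parts. Because $\hat j$ is the identity on vertices ($\widehat K(A)_0=A=L_0$) and surjective on simplices, every edge of $L$ lifts to an edge of $\widehat K(A)$ and every edge--loop in $\lvert L\rvert$ lifts to one in $\lvert\widehat K(A)\rvert$; hence $q_*$ is surjective. For the kernel, one inclusion is formal: $\ker q_*\subseteq\ker(\iota_*q_*)=\ker\phi=N$. For the other inclusion I would introduce the normal subgroup $M\trianglelefteq\pi_1(A,x)$ generated by all \emph{bigon elements} $[w*a*\overline{a'}*\overline w]$, where $a,a'$ are $1$--simplices of $\widehat K(A)$ with the same distinct endpoints that are homotopic rel.\ endpoints in $X$, and $w$ is an auxiliary path to $x$. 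Each generator dies under $q_*$, because $a\simeq_X a'$ forces $\hat j a=\hat j a'$ as $1$--simplices of $L$, so $q_*$ sends the bigon to a backtrack $\hat j a*\overline{\hat j a}$; thus $M\subseteq\ker q_*$.

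\emph{The crux} is the reverse inclusion $N\subseteq M$, which I expect to be the main obstacle. Given $n\in N$, I would represent it by an edge--loop $a_1*\cdots*a_m$ in $\widehat K(A)$ based at $x$, arranged so that the final edge $a_m$ runs from some vertex $p\neq x$ to $x$. The point is that $a_m$ may be replaced, within its $X$--homotopy class and keeping its endpoints, by post--composing with any loop $\mu$ at $x$ having $[\mu]\in N$. Choosing $\mu$ with $[\mu]=n^{-1}$ and letting $\tilde a_m$ be the $1$--simplex of $\widehat K(A)$ homotopic to $a_m*\mu$, one has $\tilde a_m\simeq_X a_m$ (since $[\mu]$ dies in $\pi_1 X$), so $a_m$ and $\tilde a_m$ form a bigon and $[a_1*\cdots*a_m]\equiv[a_1*\cdots*a_{m-1}*\tilde a_m]\pmod M$. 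But $[a_1*\cdots*a_{m-1}*\tilde a_m]$ is the class of $\ell*\mu$, namely $n\,[\mu]=1$ in $\pi_1(A,x)$. Hence $n\in M$, and combining the inclusions yields $N=M=\ker q_*$, which gives the first assertion.

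For the second assertion I would observe that $\lvert S^{sing}_*(A)\rvert\to\lvert L\rvert$ factors, up to the standard weak equivalence $\lvert S^{sing}_*(A)\rvert\simeq A\simeq\lvert\widehat K(A)\rvert$, through $q$; so on $\pi_1$ it is $\pi_1(A,x)\xrightarrow{\cong}\pi_1(\lvert\widehat K(A)\rvert)\xrightarrow{q_*}\pi_1(\lvert L\rvert)$, which by the computation above is precisely the projection $\pi_1(A,x)\twoheadrightarrow\pi_1(A,x)/N=\operatorname{im}\phi$. The two points needing genuine care, beyond bookkeeping, are (i) the quoted input that the evaluation maps are $\pi_1$--isomorphisms and that $\pi_1$ is carried by edge--loops, and (ii) the step $N\subseteq M$, whose whole force is that a single edge--replacement absorbed by one bigon relation already trivializes the class, precisely because the admissible modifications of that edge sweep out all of $N$.
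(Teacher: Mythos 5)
Your proposal is correct, and it reaches the key conclusion by a genuinely different route than the paper. Both proofs share the same framing and the same quoted inputs (the projection $\mid\widehat{K}(A)\mid\rightarrow A$ is a $\pi_1$--isomorphism, the commutative square relating $\hat{j}$ to the inclusion $A\hookrightarrow X$, and essentially the same bookkeeping for surjectivity and for the second assertion). The difference is in the injectivity step. The paper shows directly that $(p_X\circ\hat{i})_*$ is injective on $\pi_1(\mid\hat{j}\widehat{K}(A)\mid,x)$ by a geometric filling argument: it invokes relative simplicial approximation (adapted to multicomplexes) to make a null-homotopic edge-loop simplicial, bounds it by a disk in $X$, triangulates the disk, homotopes each triangle into a $2$--simplex of $\widehat{K}(X)$, and uses uniqueness of simplices in their homotopy class rel boundary to see that the relevant $2$--simplices and connecting edges already lie in $\hat{j}\widehat{K}(A)$, so the loop dies there. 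You instead compute $\ker q_*$ algebraically: the sandwich $M\subseteq\ker q_*\subseteq N\subseteq M$, where $M$ is generated by bigon relations coming from $X$--homotopic edges being identified by $\hat{j}$, and where the inclusion $N\subseteq M$ is closed by the one-edge-replacement trick ($a_m\mapsto\tilde{a}_m\simeq_A a_m*\mu$ with $[\mu]=n^{-1}$, so that a single bigon absorbs the whole class). What your approach buys is that it avoids simplicial approximation altogether (the paper must argue separately that the relative approximation theorem applies to multicomplexes) and never touches the $2$--cells of $\hat{j}\widehat{K}(A)$: only the $1$--skeleton and the fact that $\hat{j}$ identifies $X$--homotopic $1$--simplices are used, with the upper bound $\ker q_*\subseteq N$ obtained for free from the commutative square. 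What the paper's approach buys is an explicit exhibition of $2$--simplices of $\hat{j}\widehat{K}(A)$ filling the loop, which is closer in spirit to the multicomplex machinery exploited in the rest of Section 2. One small point to make explicit if you write this up: the identity $\hat{j}a=\hat{j}a'$ for $X$--homotopic $1$--simplices $a,a'$ of $\widehat{K}(A)$ is exactly the uniqueness statement the paper also relies on, so it should be stated as the definition of $\hat{j}$ on $1$--simplices rather than left implicit.
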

\begin{proof}Consider the commutative diagram
$$\begin{xy}
\xymatrix{
\mid\widehat{K}(A)\mid\ar[r]^{\hat{j}}\ar[d]^{p_A}&\mid\hat{j}\widehat{K}(A)\mid\ar[r]^{\hat{i}}&\mid\widehat{K}(X)\mid\ar[d]^{p_X}\\
A\ar[rr]^i& & X}
\end{xy} $$
We want to show that $(p_X \circ \hat{i})_*:\pi_1(\mid\hat{j}\widehat{K}(A)\mid,x)\rightarrow\pi_1(X,x)$ is injective and has image $im(\pi_1(A,x)\rightarrow\pi_1(X,x))$.

Let $\gamma:(S^1,o)\rightarrow (A,x)$ represent some non-zero element in $\pi_1(A,x)$. We think of $S^1$ as a union of two $1$--simplices $S^1=I_1\cup I_2$ with one of their common vertices in the base point $o$. By definition of $\widehat{K}(A)$ we can homotope $\gamma$, keeping the base point fixed, such that $\gamma(I_1)$ and $\gamma(I_2)$ become $1$--simplices in $\widehat{K}_1(A)$. Then $(p_X\circ \hat{i} \circ \hat{j})_*(I_1*I_2)$ represents $i_*(\gamma)$, proving that $im(i_*)$ is contained in $im(p_X \circ \hat{i})_*$. The reverse inclusion follows from commutativity of the diagram and surjectivity of $\hat{j}_*$.

To prove injectivity
of $(p_X \circ \hat{i})_*$
we will use relative simplicial approximation.
The absolute simplicial approximation theorem states that there is (after a sufficiently fine barycentric subdivision) a simplicicial approximation $g$ to a given continuous map $f$ between two simplicial complexes. The relative theorem permits one to leave $f$ unchanged on any subcomplex on which $f$ happens to already be simplicial, see \cite[Theorem 1.6.11]{Rush73}. Its proof in \cite{Rush73} does not make any use of the fact that simplices are uniquely determined by their vertices, hence it can also be applied to multicomplexes.

Now assume that a map $\gamma:(S^1,o)\rightarrow (\mid\hat{j}\widehat{K}(A)\mid,x)$ represents $\left[\gamma\right]\in ker(p_X \circ \hat{i})_*\subset\pi_1(\mid\hat{j}\widehat{K}(A)\mid,x)$. After fixing a sufficiently fine simplicial subdivision $S^1=I_1\cup\cdots\cup I_k$ we can assume the map to be simplicial, that is, the $1$--simplices $\gamma |_{I_1},\ldots,\gamma |_{I_k}$ belong to $\hat{j}\widehat{K}_1(A)$. Let $\gamma_1,\ldots,\gamma_n\in\widehat{K}_1(A)$ such that $\hat{j}(\gamma_i)=\gamma |_{I_i}$ for $i=1,\ldots,n$.

By assumption $(p_X \circ \hat{i}\circ \hat{j})
(\gamma_1*\gamma_2)
$ represents $0\in\pi_1(X,x)$, hence it bounds a map $f:(\mathbb D^2,o)\rightarrow(X,x)$. We subdivide $\mathbb D^2$ into $k-2$ triangles\footnotemark\footnotetext[4]{We can w.l.o.g.\ assume $k\ge 4$ to have a unified argument.} $D_1,\ldots,D_{k-2}$ such that $D_1$ is bounded by the edges $I_1,I_2$ and another edge $J_1$, for $2\le i\le k-3$ $D_i$ is bounded by the edges $I_{i+1},J_{i-1}$ and another edge $J_i$, and $D_{k-2}$ is bounded by the edges $I_{k-1},I_k,J_{k-3}$.

By definition of $\widehat{K}(X)$ we can homotope $f|_{D_1}$ (keeping $f|_{I_1}$ and $f|_{I_2}$ fixed) into some $2$--simplex $T_1\in \widehat{K}_2(X)$ which is bounded by 
$\hat{j}(\gamma_1),\hat{j}(\gamma_2)$
and the unique $1$--simplex $\tau_1\in \widehat{K}_1(X)$ in the homotopy class rel.\ boundary of
$f |_{J_1}$.
We observe that there exists some (degenerate) $2$-simplex $s_1:\Delta^2\rightarrow A$ whose boundary consists of $\gamma |_{I_1},\gamma |_{I_2}$ and $\gamma |_{I_1}*\gamma |_{I_2}$, where the latter means the unique $1$-simplex in $\widehat{K}_1(A)$ homotopic rel.\ boundary to the concatenation of $\gamma |_{I_1}$ and $\gamma |_{I_2}$. Denote $S_1\in \widehat{K}_2(A)$  the unique $2$-simplex in $\widehat{K}_2(A)$ homotopic rel.\ boundary to $s_1$. 
Since simplices in $\widehat{K}(X)$ are uniquely determined by their homotopy class rel.\ boundary, we necessarily have $\hat{j}(\gamma |_{I_1}*\gamma |_{I_2})=\tau_1$ and $\hat{j}(S_1)=T_1$. In particular $T_1\in\hat{j}\widehat{K}_1(A)$ which implies that the concatenation of $\gamma |_{I_1}, \gamma |_{I_2}$ and $\tau_1$ represents $0\in\pi_1(\mid\hat{j}\widehat{K}(A)\mid,x)$.

Successive application of the same argument to $D_2,\ldots,D_{k-3}$ yields $T_i\in \hat{j}\widehat{K}_2(A)$ and  $\tau_i\in\hat{j}\widehat{K}_1(A)$ such that the concatenation of $\overline{\tau}_{i-1}, \gamma |_{I_{i+1}}$ and $\tau_i$ represents $0\in\pi_1(\mid\hat{j}\widehat{K}(A)\mid,x)$
for all $i=2,\ldots,k-3$. Finally, application of the argument to $D_{k-2}$ 
shows that
the concatenation of $\overline{\tau}_{k-3}, \gamma |_{I_{k-1}}$ and
$\gamma |_{I_k}$ represents $0\in\pi_1(\mid\hat{j}\widehat{K}(A)\mid,x)$.

This implies that $\gamma |_{I_1}*\cdots*\gamma |_{I_{k}}$ represents $0\in\pi_1(\hat{j}\widehat{K}(A),x)$ which we wanted to prove.

The second claim of Lemma \ref{lem:fundamentalgroup} then follows from the commutative diagram in the beginning of the proof together with the fact that inclusion $\widehat{K}(A)\subset S_*^{sing}(A)$ induces an isomorphism $\pi_1(\mid\widehat{K}(A)\mid,x)\rightarrow\pi_1( \mid S_*^{sing}(A)\mid,x) \cong\pi_1(A,x)$ by \cite[page 42]{Gro82}.
\end{proof}
Now we are ready to handle the case when $\pi_1A\rightarrow\pi_1X$ is not injective.
\begin{lemma}\label{lem:2.1}
Let $(X,A)$ be a pair of topological spaces. Assume that each path
component of $A$ has amenable fundamental group.
Then there is an isometric isomorphism $$I^* \co H_b^*\left(K\left(X\right),jK\left(A\right)\right)\rightarrow H_b^*\left(X,A\right).$$
\end{lemma}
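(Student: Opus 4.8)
The plan is to reduce the statement to the $\pi_1$--injective case treated in \cite[Proposition 3]{Ku}, the two new ingredients being Lemma \ref{lem:fundamentalgroup} and the amenable action of $\Pi_X(A)$ recalled in Section \ref{sec:2.1}. First I would reduce from $K$ to $\widehat{K}$: by the relative form of \cite[Section 3.3, Corollary D]{Gro82} recalled above, the projection induces an isometric isomorphism $H_b^*(K(X),jK(A))\rightarrow H_b^*(\widehat{K}(X),\hat{j}\widehat{K}(A))$, so it suffices to produce an isometric isomorphism onto $H_b^*(X,A)$, which will be the map $I^*$ (induced on bounded cochains by the realisation $p_X$). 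I would then record the two amenability inputs. On the one hand, the absolute case $A=\emptyset$ of \cite[Proposition 3]{Ku} gives an isometric isomorphism $H_b^*(X)\cong H_b^*(K(X))$. On the other hand, by Lemma \ref{lem:fundamentalgroup} each path component of $\mid\hat{j}\widehat{K}(A)\mid$ has fundamental group $\mathrm{im}(\pi_1(A)\rightarrow\pi_1(X))$; being a quotient of the amenable group $\pi_1(A)$, this is again amenable, as is $\pi_1(A)$ itself. Hence $H_b^*(A)=0=H_b^*(jK(A))$ in all degrees $*\ge 1$, and both equal $\ell^\infty(\pi_0 A)$ in degree $0$ (note that $j$ does not merge path components, since its vertex map is the inclusion $A\hookrightarrow X$).

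The engine of the argument is the action of $\Pi_X(A)$ on $K(X)$, which fixes $K_0-L_0$ pointwise and permutes $L_0=A$ through the loops of $L=jK(A)$. The crucial claim is that $\Pi_X(A)$ is amenable. To see this I would use the homomorphism $\Pi_X(A)\rightarrow\mathrm{Sym}_{\mathrm{fin}}(A)$ sending $\{\gamma_1,\dots,\gamma_n\}$ to the finitely supported permutation $\gamma_i(0)\mapsto\gamma_i(1)$ of $A$. Its target is locally finite, hence amenable, and its kernel consists of finite collections of based loops, i.e.\ is the restricted direct product $\bigoplus_{x\in A}\pi_1(\mid L\mid,x)$; by Lemma \ref{lem:fundamentalgroup} each factor is the amenable group $\mathrm{im}(\pi_1(A,x)\rightarrow\pi_1(X,x))$, so the kernel is a direct sum of amenable groups and therefore amenable. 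As an extension of an amenable group by an amenable group, $\Pi_X(A)$ is amenable. Averaging bounded cochains over an invariant mean on $\Pi_X(A)$ then yields $\Pi_X(A)$--invariant representatives together with norm--nonincreasing cochain maps and chain homotopies; following the scheme of the injective case \cite[Proposition 3]{Ku}, but with the submulticomplex $K(A)$ replaced by the non-injective image $jK(A)$ and the discrepancy absorbed by this averaging, one obtains the isometric identification of the relative bounded cohomology of $(K(X),jK(A))$ with that of $(X,A)$ in degrees $*\ge 2$. The remaining degrees $0,1$ I would check directly from the long exact sequences of the two pairs, using $H_b^1=0$ and the degree--$0$ computation above.

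The step I expect to be the main obstacle is upgrading the abstract isomorphism to an \emph{isometry}. Combining the long exact sequences of the two pairs with the vanishing of the subspace terms and the absolute comparison already produces an isomorphism $H_b^*(K(X),jK(A))\cong H_b^*(X,A)$ in degree $\ge 2$ by the five lemma; but the five lemma only delivers an isomorphism of seminormed spaces, which need not be norm--preserving. The norm equality must instead come from the explicit norm--nonincreasing maps built by averaging over $\Pi_X(A)$, and so the technical heart is precisely the verification that $\Pi_X(A)$ is amenable. This is where the non-injectivity of $j$, the whole reason for Section \ref{sec:2.2}, is overcome: without $\pi_1$--injectivity one cannot invoke a genuine submulticomplex $K(A)\subset K(X)$, and it is Lemma \ref{lem:fundamentalgroup} that identifies the relevant loops in $\mid L\mid$ with the amenable image $\mathrm{im}(\pi_1(A)\rightarrow\pi_1(X))$, thereby guaranteeing the amenability on which the whole averaging rests.
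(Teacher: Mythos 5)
There is a genuine gap: your proposal never actually constructs the isometric comparison between $H_b^*\left(K\left(X\right),jK\left(A\right)\right)$ and $H_b^*\left(X,A\right)$. The tool you lean on --- averaging bounded cochains over the amenable group $\Pi_X\left(A\right)$ --- compares $G$--invariant cochains with arbitrary cochains \emph{on the multicomplex $K\left(X\right)$ itself}; in the paper it is the engine of the proof of Theorem \ref{thm:2.2} (relating the relative to the absolute bounded cohomology of $\left(K\left(X\right),jK\left(A\right)\right)$ via the subclaim that $C_*^{simp}\left(jK\left(A\right)\right)\otimes_{\mathbb{Z}G}\mathbb{Z}=0$), not of Lemma \ref{lem:2.1}. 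It produces no norm--nonincreasing maps between $C_b^*\left(K\left(X\right),jK\left(A\right)\right)$ and the singular complex $C_b^*\left(X,A\right)$, so the ``discrepancy absorbed by this averaging'' step does not go through. Your long-exact-sequence/five-lemma route gives, as you yourself note, only an abstract isomorphism in degrees $\ge 2$ (and the lemma as stated is claimed in all degrees, with no degree restriction); the isometry is left unestablished.

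The missing idea is Gromov's Relative Mapping Theorem. The paper first reduces, via the isometry lemma and the relative version of \cite[Section 3.3, Corollary D]{Gro82}, to comparing $\left(\widehat{K}\left(X\right),\hat{j}\widehat{K}\left(A\right)\right)$ with $\left(S_*^{sing}\left(X\right),S_*^{sing}\left(A\right)\right)$. The subtlety is that this is not an inclusion of pairs in the naive sense (a cochain vanishing on $S_*^{sing}\left(A\right)$ need not vanish on $\hat{j}\widehat{K}\left(A\right)$, whose simplices need not lie in $A$), so one instead uses the simplicial chain homotopy inverse $T$ of May, which by construction carries $S_*^{sing}\left(A\right)$ into $\hat{j}\widehat{K}\left(A\right)$ and realizes to a continuous map of pairs. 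Lemma \ref{lem:fundamentalgroup} then shows that on the subspace this map induces the surjection $\pi_1\left(A,x\right)\rightarrow im\left(\pi_1\left(A,x\right)\rightarrow\pi_1\left(X,x\right)\right)$, whose kernel is amenable as a subgroup of the amenable group $\pi_1\left(A,x\right)$; the Relative Mapping Theorem then delivers the isometric isomorphism in one stroke and in all degrees. So you correctly identified Lemma \ref{lem:fundamentalgroup} as the essential input, but you fed it into the wrong machine: it is needed to verify the hypotheses of the relative mapping theorem (amenable kernel on the subspace), not to prove amenability of $\Pi_X\left(A\right)$, which plays no role in this lemma.
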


\begin{proof}By the isometry lemma in \cite[page 43]{Gro82} and its relative version in \cite[Proposition 1]{Ku} there is an
isometric isomorphism
$$H_b^*\left(X,A\right)=H_b^{simp}\left(S_*^{sing}\left(X\right),S_*^{sing}\left(A\right)\right).$$
Moreover (see Section \ref{sec:2.1}) there is an isometric isomorphism
$$H_b^{simp}\left(K\left(X\right),jK\left(A\right)\right)\rightarrow H_b^{simp}
\left(\widehat{K}\left(X\right),\hat{j}\widehat{K}\left(A\right)\right).$$
Thus it suffices to show that the inclusion $$\left(\widehat{K}\left(X\right),\hat{j}\widehat{K}\left(A\right)\right)\subset
\left(S_*^{sing}\left(X\right),S_*^{sing}\left(A\right)\right)$$ induces an isometric isomorphism in (simplicial)
bounded cohomology.

The inclusion $S \co \widehat{K}\left(X\right)\rightarrow S_*^{sing}\left(X\right)$ has a (simplicial) chain homotopy inverse $T$ as in \cite[Section 9]{May92}. By construction it maps $S_*^{sing}\left(A\right)$ to $\hat{j}\widehat{K}\left(A\right)$. (One should be aware that the restriction of $T$
to $S_*^{sing}\left(A\right)$ is not a chain homotopy equivalence if $\pi_1A$ does not inject into $\pi_1X$.) The simplicial map $T$ induces a continuous map $$T \co
\left(\mid S_*^{sing}\left(X\right)\mid, \mid S_*^{sing}\left(A\right)\mid\right)\rightarrow \left(\mid\widehat{K}\left(X\right)\mid,\mid
\hat{j}\widehat{K}\left(A\right)\mid\right).$$
The map $T$ induces an isomorphism of path components and for each path component the induced homomorphism of
fundamental groups is surjective with amenable kernel. (Indeed by Lemma \ref{lem:fundamentalgroup} the homomorphism of fundamental groups based at $x$ is the homomorphism from $\pi_1\left(A,x\right)$ to $im\left(\pi_1
\left(A,x\right)\rightarrow\pi_1\left(X,x\right)\right)$, whose kernel is a subgroup of $\pi_1\left(A,x\right)$, hence amenable.)

Thus the assumptions of Gromov's Relative Mapping Theorem (\cite[Page 57]{Gro82}) are satisfied
and we obtain an isometric isomorphism in (singular)
bounded cohomology
$$T^* \co H_b^*\left(\mid\widehat{K}\left(X\right)\mid,\mid
\hat{j}\widehat{K}\left(A\right)\mid\right)\rightarrow
H_b^*\left(\mid S_*^{sing}\left(X\right)\mid, \mid S_*^{sing}\left(A\right)\mid\right).$$
Again by the isometry lemma this implies that we also have an isometric isomorphism in simplicial bounded cohomology, hence the claim of the lemma.
\end{proof}

\subsection{Proof of Isometry.}\label{sec:2.3}

\begin{theorem}\label{thm:2.2}
Let $(X,A)$ be a pair of topological spaces. Assume that each path
component of $A$ has amenable fundamental group.
Then the inclusion $\left(X,\emptyset\right)\rightarrow \left(X,A\right)$ induces an isometric isomorphism
$$H_b^*(X,A)\rightarrow H_b^*(X)$$
in degrees $*\ge 2$.
\end{theorem}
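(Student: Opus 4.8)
The plan is to transport the statement to Gromov's multicomplexes, where the amenable group $\Pi_X(A)$ acts on $K(X)$ and furnishes an averaging operator, and then to prove the isometric isomorphism by splitting it into the two norm inequalities and an algebraic bijectivity statement.

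First I would reduce to a simplicial statement. Writing $L := jK(A)$, the relative isometry of Lemma~\ref{lem:2.1} gives an isometric isomorphism $H_b^*(X,A)\cong H_b^*(K(X),L)$, while the absolute counterpart of the same reductions recalled in Section~\ref{sec:2.1} (the isometry lemma and the isometric isomorphisms $H_b^*(S_*^{sing}(X))\cong H_b^*(\widehat K(X))\cong H_b^*(K(X))$) gives $H_b^*(X)\cong H_b^*(K(X))$; both identifications are natural with respect to the maps induced by $(X,\emptyset)\rightarrow(X,A)$. Hence it suffices to prove that the restriction map $r^*\co H_b^*(K(X),L)\rightarrow H_b^*(K(X))$, induced on cochains by the inclusion of bounded cochains vanishing on $L$ into all bounded cochains, is an isometric isomorphism in degrees $*\ge 2$.

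Two of the three ingredients are then routine. Since passing from the relative to the absolute cochain complex only enlarges the supply of coboundaries, $r^*$ is norm non-increasing. For bijectivity I would invoke Lemma~\ref{lem:fundamentalgroup}, which identifies the fundamental group of each path component of $\mid L\mid$ with a quotient of an amenable group, hence amenable; consequently $H_b^*(L)=0$ for $*\ge 1$, and the long exact sequence of the pair $(K(X),L)$ forces $r^*$ to be an isomorphism precisely in degrees $*\ge 2$ (in degree $1$ it can fail because of the contribution of $H_b^0(L)$). Combined with the non-increasing direction, it then remains only to bound the relative seminorm from above by the absolute one.

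This reverse inequality is the heart of the matter: given a bounded absolute $n$-cocycle $c$ on $K(X)$ with $n\ge 2$, I must exhibit a cocycle vanishing on every simplex of $L$, cohomologous to $c$ in the absolute complex, and of sup-norm at most $\|c\|_\infty$. Here the action of $\Pi_X(A)$ is indispensable. Because every path component of $A$ has amenable fundamental group, $\Pi_X(A)$ is amenable: it is an extension whose normal subgroup is the restricted direct product of the (amenable) fundamental groups of the components of $\mid L\mid$ and whose quotient is the locally finite, hence amenable, group of finitely supported permutations of $L_0=A$. I would fix a left-invariant mean on $\Pi_X(A)$ and average $c$ over it. The resulting cochain $\overline c$ is $\Pi_X(A)$-invariant with $\|\overline c\|_\infty\le\|c\|_\infty$, and it is cohomologous to $c$, because each element of $\Pi_X(A)$ acts on $\mid K(X)\mid$ by sliding vertices along paths in $\mid L\mid$ and is therefore homotopic to the identity, so that the prism homotopies are uniformly bounded and their mean provides an explicit primitive for $\overline c-c$. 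The invariant cocycle $\overline c$ restricts to an invariant cocycle on $L$, which is a coboundary since $H_b^n(L)=0$; using invariance I would select a primitive, extend it off $L$, and subtract its coboundary to obtain a representative vanishing on $L$. I expect the delicate point to be exactly this last step — controlling the sup-norm of the modified cocycle uniformly over the (in general infinitely many, and non-$\pi_1$-injective) simplices meeting $L$, since a naive extension by zero enlarges the norm on simplices with faces in $L$ — and it is precisely the amenability of the whole group $\Pi_X(A)$, beyond that of the individual fundamental groups of the components of $A$, that is designed to secure this estimate.
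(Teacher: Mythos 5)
Your overall strategy coincides with the paper's: reduce to the multicomplex pair $(K(X),jK(A))$ via Lemma~\ref{lem:2.1} and the absolute isometry lemma, and then exploit the amenability of $G=\Pi_X(A)$ by averaging bounded cochains over an invariant mean. Your first two ingredients (norm non-increase of the comparison map, and bijectivity in degrees $\ge 2$ from the long exact sequence together with $H_b^*(\mid jK(A)\mid)=0$ for $*\ge 1$) are sound. The genuine gap is exactly the step you flag as delicate. After averaging you propose to force the invariant cocycle $\overline c$ to vanish on $L=jK(A)$ by choosing a bounded primitive $u$ of $\overline c\mid_L$, extending it by zero to $K(X)$ and subtracting $\delta\tilde u$. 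As you yourself observe, this destroys the sup-norm bound on simplices of $K(X)$ that are not in $L$ but have faces in $L$, and the closing sentence that ``the amenability of the whole group $\Pi_X(A)$ \ldots is designed to secure this estimate'' is not an argument: you never specify a choice of primitive or an averaging procedure that would control $\|\overline c-\delta\tilde u\|_\infty$, and no such correction term is needed or used in the paper.

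The paper's resolution is different and eliminates the correction term entirely. Simplicial chains are taken to be \emph{alternating}, so that $\overline{\sigma}=-\sigma$ for the orientation-reversal of a simplex, and by \cite[Observation 1]{Ku} every simplex $\sigma$ of dimension $\ge 1$ with an edge in $A$ --- in particular every simplex of dimension $\ge 1$ in $jK(A)$ --- satisfies $g\sigma=\overline{\sigma}$ for some $g\in\Pi_X(A)$. Hence any $\Pi_X(A)$--invariant alternating cochain $f$ satisfies $f(\sigma)=f(g\sigma)=f(\overline{\sigma})=-f(\sigma)$, i.e.\ $f(\sigma)=0$: the averaged cocycle $\overline c$ \emph{already} vanishes on all of $L$ in degrees $\ge 1$, with no loss of norm. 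Dually, the paper phrases this as the subclaim $C_*^{simp}(jK(A))\otimes_{\mathbb ZG}\mathbb Z=0$ for $*\ge 1$, so that
$$i_3\colon C_*^{simp}(K(X))\otimes_{\mathbb ZG}\mathbb Z\rightarrow C_*^{simp}(K(X),jK(A))\otimes_{\mathbb ZG}\mathbb Z$$
is an isometric isomorphism of chain groups in degrees $\ge 1$, which gives the isometric isomorphism in bounded cohomology in degrees $\ge 2$ after identifying the $G$--coinvariant complexes with the original ones via the averaging isomorphisms $k_1^*,k_2^*$ (here one also needs that every element of $\Pi_X(A)$ is homotopic to the identity, which is what you use for ``$\overline c$ cohomologous to $c$''). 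To repair your proof, replace the ``primitive on $L$'' construction by this vanishing observation, making sure your cochains are alternating, which is harmless since the construction of $\widehat K(X)$ in Section~\ref{sec:2.1} is arranged to be permutation-invariant.
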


\begin{remark} It is well known (\cite{Gro82}, \cite{Iva87}) that amenability of $\pi_1A$ implies $H_b^n\left(A\right)=0$ for $n\ge 1$ and hence the algebraic isomorphism $H_b^*\left(
X,A\right)\cong H_b^*\left(X\right)$ for $*\ge 2$ follows from the long exact sequence associated to the pair $\left(X,A\right)$. Moreover it is clear by construction that this isomorphism has norm $\le 1$. The nontrivial part of Theorem \ref{thm:2.2} is to prove that the isomorphism is indeed an isometry.\end{remark}
\begin{proof}
The proof builds on Gromov's theory of multicomplexes developed in \cite[Section 3]{Gro82}, but we will stick to the notation from \cite[Section 1]{Ku}.

As in Section \ref{sec:2.1}, to the topological space $X$ we have associated the (minimally complete, aspherical) multicomplex $K\left(X\right)$ and an
isometric isomorphism $$ H_b^*\left(K\left(X\right)\right)\rightarrow H_b^*\left(X\right)$$ from the simplicial bounded cohomology of $K\left(X\right)$ to the singular bounded cohomology of $X$, see \cite[page 45--46]{Gro82}.

If each path component of
$A$ is amenable, then we have from
Lemma \ref{lem:2.1} the isometric isomorphism
$$I^* \co H_b^*\left(K\left(X\right),jK\left(A\right)\right)\rightarrow H_b^*\left(X,A\right).$$
Thus the commutative diagram
$$\begin{xy}
\xymatrix{
H_b^*\left(X\right)&H_b^*\left(X,A\right)\ar[l]_-{i_1^*}\\
 H_b^*\left(K\left(X\right)\right)\
\ar[u]^-{I^*}&H_b^*\left(K\left(X\right),jK(A)\right) \ar[u]_-{I^*}\ar[l]_-{i_2^*}}
\end{xy} $$
implies that to prove the theorem it is sufficient to prove that $$H_b^*\left(K\left(X\right),jK\left(A\right)\right)\rightarrow H_b^*\left(K\left(X\right)\right)$$ is an isometric isomorphism in degrees $*\ge 2$.

Let $G=\Pi_X(A)$ be the group defined in Section \ref{sec:2.1} with its action on $K\left(X\right)$.
Amenability of $\pi_1A$ for all path-components of $A$ implies that $G$ is amenable by
\cite[Lemma 4]{Ku}.

Consider the following commutative diagram, where simplicial chains are by
definition alternating, i.e.\ for each simplex $\sigma$ and each permutation $\pi$ of its vertices we have
$\sigma\circ f_\pi=sign(\pi)\sigma$ in $C_*^{simp}$, in particular $\overline{\sigma}=-\sigma$ for
$\overline{\sigma}$ meaning $\sigma$ with the opposite orientation. The vertical arrows
$k_1,k_2$ are the obvious chain homomorphisms mapping $\sigma$ to $\sigma\otimes 1$.
$$\begin{xy}
\xymatrix{
C_*^{simp}\left(K\left(X\right)\right)
\ar[r]^-{i_2}\ar[d]_-{k_1}&C_*^{simp}\left(K\left(X\right),jK(A)\right)
\ar[d]^-{k_2}\\
 C_*^{simp}\left(K\left(X\right)\right)\otimes_{{\mathbb Z}G}{\mathbb Z}
\ar[r]^-{i_3}&C_*^{simp}\left(K\left(X\right),jK\left(A\right)\right)\otimes_{{\mathbb
Z}G} {\mathbb Z} }\end{xy} $$ Here $\mathbb Z$ is considered an $\mathbb ZG$--module with the trivial
$G$--action, i.e.\ taking the tensor product $\otimes_{{\mathbb Z}G}{\mathbb Z}$ means quotienting out the $G$--action. We remark that the tensor products in the lower row are defined because $g(\sigma\circ f_\pi)=(g\sigma)\circ f\pi$ for all permutations of vertices. Moreover $G=\Pi_X(A)$ maps $jK(A)$ to itself: this follows easily from the observation that the action of $\Pi_A(A)$ on $\widehat{K}(A)$ is compatible with the action of $\Pi_X(A)$ on $\widehat{K}(X)$ (with respect to the quotient maps $\Pi_A(A)\rightarrow\Pi_X(A)$ and $\hat{j}:\widehat{K}(A)\rightarrow \widehat{K}(X)$).

On $C_*^{simp}\left(K\left(X\right)\right)\otimes_{{\mathbb Z}G}{\mathbb Z}$ we have the $l^1$--seminorm
$$\left\| \sum_{i=1}^r(\sum_{j=1}^sa_{ij}\sigma_{ij})\otimes n_i \right\| _1=\sum_{i=1}^r\mid n_i\mid\sum_{j=1}^s\mid a_{ij}\mid$$
for $\sum_{j=1}^sa_{ij}\sigma_{ij}\in C_*^{simp}\left(K\left(X\right)\right)$ and $n_i\in\mathbb Z$. (This is well-defined, independently of the chosen representative in the tensor product.) On the complex of bounded cochains we consider the dual $l^\infty$--seminorm. Then the complex of bounded cochains $C_b^*\left(K\left(X\right)\otimes_{{\mathbb Z}G}{\mathbb Z}\right)$ agrees with the complex of $G$--invariant bounded simplicial cochains on $K\left(X\right)$.

Accordingly, on $C_*^{simp}\left(K\left(X\right),jK\left(A\right)\right)\otimes_{{\mathbb
Z}G} {\mathbb Z}$ we define the $l^1$--seminorm by
$$\| w\|_1=\inf\left\{\| z\|_1: z\in C_*^{simp}\left(K\left(X\right)\right)\otimes_{{\mathbb Z}G}{\mathbb Z}, i_3(z)=w\right\}$$ and on the complex of bounded cochains we consider the dual $l^\infty$--seminorm. We remark that $C_b^*\left(C_*(K\left(X\right),jK(A))\otimes_{{\mathbb Z}G}{\mathbb Z}\right)$ agrees with the complex of $G$--invariant bounded cochains on $K\left(X\right)$ which vanish on $jK(A)$.

Since $G$ is amenable, we have on the level of bounded cochains a left-inverse $av$  to $k_1^*$, which is given by averaging bounded cochains, thus mapping bounded cochains to $G$--invariant bounded cochains. Explicitly, $av$ is given as follows: for $f\in C_b^*(K(X))$ and $\sigma\otimes 1\in C_*^{simp}\left(K\left(X\right)\right)\otimes_{{\mathbb Z}G}{\mathbb Z}$ we equivariantly identify $G\sigma$ with a quotient of $G$, use this identification to pull $f$ back to a bounded function $\tilde{f}$ on $G$ and then use the $G$--invariant mean on $G$ - which exists by definition of amenability - to define $av(f)(\sigma)$ to be the mean of $\tilde{f}$. By the Proposition on \cite[page 48]{Gro82} (see also \cite[Theorem 2(ii)]{Ku}),
which applies because all elements of $G=\Pi_X\left(A\right)$ are homotopic to the identity by construction (see \cite[Section 1.5]{Ku} for the latter fact)
it follows that $k_1^*$ is an isometric isomorphism is bounded cohomology.
The same argument yields that also $k_2^*$ is an isometric isomorphisms in bounded cohomology.

Thus in the commutative diagram
$$\begin{xy}
\xymatrix{
H_b^*\left(K\left(X\right)\right)&H_b^*\left(K\left(X\right),jK(A)\right)\ar[l]_-{i_2^*}\\
 H_b^*\left(C_*^{simp}\left(K\left(X\right)\right)\otimes_{{\mathbb Z}G}{\mathbb Z}\right)
\ar[u]^-{k_1^*}&H_b^*\left(C_*^{simp}\left(K\left(X\right),jK(A)\right)\otimes_{{
\mathbb Z}G} {\mathbb Z}\right) \ar[u]_-{k_2^*}\ar[l]_-{i_3^*}}
\end{xy} $$
we have that $k_1^*$ and $k_2^*$ are isometric isomorphisms.
So to prove the claim it suffices to show that $i_3$ induces
an isometric isomorphism in bounded cohomology (in degrees $*\ge 2$).
We will prove that $i_3$,
on the level of chain or (bounded) cochain groups, is indeed even a group isomorphism of chain groups in degrees $*\ge 1$. This will imply that we have an isomorphism in bounded cohomology in degrees $*\ge2$.\\

{\bf Subclaim}: {\em If $*\ge 1$, then }
$$i_3 \co C_*^{simp}\left(K\left(X\right)\right)\otimes_{{\mathbb Z}G}{\mathbb Z}
\rightarrow C_*^{simp}\left(K\left(X\right),jK(A)\right)\otimes_{{\mathbb
Z}G} {\mathbb Z} $$ {\em is an isomorphism.}\\

{\bf Proof of Subclaim}:\\
\\
Taking tensor products is a right-exact functor, thus we have an exact sequence
$$C_*^{simp}\left(jK\left(A\right)\right)\otimes_{\mathbb ZG}\mathbb Z\rightarrow
C_*^{simp}\left(K\left(X\right)\right)\otimes_{\mathbb ZG}\mathbb Z\rightarrow
C_*^{simp}\left(K\left(X\right),jK\left(A\right)\right)\otimes_{\mathbb ZG}\mathbb Z\rightarrow 0.$$
Thus it suffices to prove that
$$C_*^{simp}\left(jK\left(A\right)\right)\otimes_{\mathbb ZG}\mathbb Z=0$$
for $*\ge 1$.

Let $\sigma$ be a simplex
in $jK(A)$. Since the $G$--action on
$\mathbb Z$ is trivial we have $\sigma\otimes 1=g\sigma\otimes 1$ in
$C_*^{simp}\left(jK\left(A\right)\right)\otimes_{\mathbb ZG}\mathbb Z\subset C_*^{simp}\left(K\left(X\right)\right)\otimes_{{\mathbb Z}G}{\mathbb Z}$.

By \cite[Observation 1]{Ku} the following is true: \\

{\em for
each simplex $\sigma$ with at least one edge in $A$ there is some
$g\in G$ with $g\sigma=\overline{\sigma}$.} \\
\\
In particular this applies to simplices of dimension $*\ge 1$ (but not to $0$--simplices!) in $jK\left(A\right)$.
As a consequence, for each simplex of dimension $\ge 1$ in $jK(A)$ we have
$$\sigma\otimes 1=g\sigma\otimes 1=\overline{\sigma}\otimes
1=-\sigma\otimes 1,$$ hence $$\sigma\otimes 1=0$$ in
$C_*^{simp}\left(jK\left(A\right)\right)\otimes_{\mathbb ZG}\mathbb Z\subset C_*^{simp}\left(K\left(X\right)\right)\otimes_{{\mathbb Z}G}{\mathbb Z}$. This finishes the proof of the subclaim.\\

From the subclaim it follows easily that $i_3$ induces an isometric isomorphism in bounded cohomology in degrees $*\ge 2$. Recall that the complex defining simplicial bounded cohomology is defined as the dual of the $\ell^1$--completion of the simplicial chain complex. The $\ell^1$--norm on $$C_*^{simp}\left(K\left(X\right),jK(A)\right)\otimes_{{\mathbb Z}G}{\mathbb Z}$$
is defined as the infimum of the $\ell^1$--norms of the preimages under $i_3$. In particular $i_3$ being an isomorphism for $*\ge1$ implies that it is an isometry in that range. Now consider the chain complexes $C_*,D_*$ defined by
$$C_0=0, \ C_*=C_*^{simp}\left(K\left(X\right)\right)\otimes_{{\mathbb Z}G}{\mathbb Z}\mbox{\ for\ }*\ge 1$$
$$D_0=0, \ D_*=C_*^{simp}\left(K\left(X\right),jK(A)\right)\otimes_{{\mathbb Z}G}{\mathbb Z}\mbox{\ for\ }*\ge 1$$
with the obvious boundary operators. We have just proved that $$i_3 \co C_*\rightarrow D_*$$ is an isometric isomorphism for the $\ell^1$--norm, which implies that $\left(i_3\right)^*_b$ is also an isometry for the bounded cochain complexes (the duals of the $\ell^1$--completions).

Since the bounded cochain complexes of $C_*$ and $C_*^{simp}\left(K\left(X\right)\right)\otimes_{{\mathbb Z}G}{\mathbb Z}$ agree in degrees $\ge 1$, their cohomology (and its induced pseudonorm) agree in degrees $\ge 2$. In the same way in degrees $\ge 2$ the bounded cohomology of $D_*$ agrees with that of $C_*^{simp}\left(K\left(X\right),jK(A)\right)\otimes_{{\mathbb Z}G}{\mathbb Z}$. Thus we have proved that $\left(i_3\right)_b^*$ is an isometric isomorphism in degrees $*\ge 2$, which finishes the proof of Theorem \ref{thm:2.2}.
\end{proof}

\section{Simplicial volume}

In this section, we devote ourselves to verifying Theorem \ref{thm:1.3}.
We first collect some definitions and results on the simplicial volume mainly due to Gromov \cite{Gro82}. In the sequel homology will always be understood to be homology with $\mathbb R$-coefficients.

\subsection{Simplicial $\ell^1$--norm and simplicial volume}\label{sec:3.1}
Let $X$ be a topological space. Let $C_*(X)$ denote the real singular chain complex of $X$. Given a $k$--chain $c=\sum_{i=1}^r a_i \sigma_i$ where $\sigma_1,\ldots,\sigma_r$ are distinct singular simplices, the simplicial $\ell^1$--norm $\| c\|_1$ of $c$ is defined by $\|c \|_1 = \sum_{i=1}^r |a_i |.$ The simplicial $\ell^1$--norm induces an $\ell^1$--seminorm on the singular homology $H_*(X)$ as follows: For a singular homology class $\alpha \in H_k(X)$, the $\ell^1$--seminorm $\| \alpha \|$ is defined by
$$\| \alpha \| = \inf \{ \|c\|_1 \ | \ c \in C_k(X) \text{ cycle representing }\alpha \}.$$

For a connected, oriented, closed manifold $M$, \emph{the simplicial volume $\| M \|$ of $M$} is defined as the $\ell^1$--seminorm of its fundamental class. If $M$ is not orientable, the simplicial volume $\| M \|$ is defined by $\| M\|= \frac{1}{2} \|\widetilde{M} \| $ where $\widetilde{M}$ is the oriented double covering of $M$.

In the same way, the simplicial volume $\|M\|$ of an open manifold $M$ is defined via locally finite homology instead of singular homology. This is because the fundamental class of $M$ lives in the locally finite homology of $M$. More precisely, let $M$ be a connected, oriented $n$--manifold without boundary. Let $c=\sum_{i=1}^\infty a_i \sigma_i$ be a formal infinite linear sum of singular $k$--simplices in $M$ with coefficients in $\mathbb{R}$.
The infinite chain $c$ is said to be \textit{locally finite} if any compact subset of $M$ intersects the image of only finitely many singular simplices occurring in $c$. The \emph{locally finite homology $H^\mathrm{lf}_*(M)$ of $M$} is defined as the homology of the complex $C^\mathrm{lf}_*(M)$ consisting of locally finite chains.
It is a standard fact that there is a fundamental class $[M]$ in $H_n^\mathrm{lf}(M)$ (See \cite{BM60}). Note that one advantage of locally finite homology is the existence of a fundamental class of any oriented manifold.

For a locally finite chain $c=\sum_{i=1}^\infty a_i \sigma_i$ in $M$, the $\ell^1$--norm $\|c\|_1$ of $c$ is defined by $\|c\|_1 =\sum_{i=1}^\infty |a_i|$.
This $\ell^1$--norm gives rise to an $\ell^1$--seminorm on the locally finite homology $H^\mathrm{lf}_*(M)$ in the same way as the $\ell^1$--seminorm on the singular homology of $M$ is defined. \emph{The simplicial volume $\|M\|$ of $M$} is defined as the $\ell^1$--seminorm of the fundamental class in $H^\mathrm{lf}_n(M)$, that is,
$$\| M \| = \inf \{ \|c\|_1 \ | \ c \in C^\mathrm{lf}_n(M) \text{ fundamental cycle of }M \}.$$
Note that the $\ell^1$--norm of a locally finite chain is not necessarily finite and hence, the simplicial volume of open manifolds can be infinite.

For a compact $n$--manifold $V$ with boundary $\partial V$, the simplicial $\ell^1$--norm of each element in the real relative singular chain complex $C_*(V,\partial V)$ is defined as the infimum of the $\ell^1$--norms of its representatives and gives rise to a seminorm on the relative singular homology $H_*(V,\partial V)$. \emph{The relative simplicial volume $\|V,\partial V\|$} is defined as the seminorm of the relative fundamental class $[V,\partial V] \in H_n(V,\partial V)$. We refer the reader to \cite{Gro82} for further details on the simplicial volume.

\subsection{Geometric simplicial volume}\label{sec:3.2}
Gromov introduced another kind of simplicial volume for Riemannian open manifolds in \cite[Section 4.4]{Gro82}. Let $(M,g_M)$ be a connected, oriented, open, Riemannian $n$--manifold.
Fixing a metric on the standard $k$--simplex $\Delta^k$ by the Euclidean metric, the Lipschitz constant $\mathrm{Lip}(\sigma)$ of a singular simplex $\sigma : \Delta^k \rightarrow M$ is defined with respect to the Riemannian metric on $M$. For each locally finite $k$--chain $c \in C^\mathrm{lf}_k(M)$, the Lipschitz constant $\mathrm{Lip}(c)$ is defined as the supremum of all Lipschitz constants of the simplices occurring in $c$.
Then, the geometric simplicial volume {$\|M,g_M\|_\mathrm{Lip}$} is defined by
$${\|M,g_M\|_\mathrm{Lip}} =\{ \|c \|_1 \ | \ c \in C^\mathrm{lf}_n(M)\text{ fundamental cycle with }\mathrm{Lip}(c)<\infty \}.$$
{Note that the geometric simplicial volume of $M$ depends on the Riemannian metric $g_M$.

{
In the case that $M$ is a closed Riemannian manifold, its geometric simplicial volume is equal to its simplicial volume and therefore independent of the Riemannian metric.}
Gromov \cite{Gro82} establishes the proportionality principle for the geometric simplicial volume.
\begin{theorem}[Gromov \cite{Gro82}]\label{thm:3.1}
Let {$(M,g_M)$} be a closed Riemannian manifold, and let {$(N,g_N)$} be a complete Riemannian manifold of finite volume. Assume the universal covers of $M$ and $N$ are isometric. Then,
{$$\frac{\| M,g_M \|_\mathrm{Lip}}{\mathrm{Vol}(M,g_M)}= \frac{\| N,g_N \|_\mathrm{Lip}}{\mathrm{Vol}(N,g_N)}.$$}
\end{theorem}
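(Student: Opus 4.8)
The plan is to prove Theorem \ref{thm:3.1} via measure homology and the classical smearing construction, reducing both ratios to a single invariant of the common universal cover. Since Theorem \ref{thm:3.1} is due to Gromov, with a complete argument in \cite{LS09-2}, I will only outline the strategy and indicate where the finite-volume hypothesis enters.

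First I would replace the Lipschitz locally finite singular homology by \emph{Lipschitz measure homology}, whose chains are signed measures of finite total variation on the space of Lipschitz singular simplices, equipped with the total-variation seminorm. The key input, established in \cite{LS09-2}, is that the natural inclusion of Lipschitz locally finite singular chains into measure chains induces an isometric isomorphism in the relevant degree and carries the locally finite fundamental class to the measure-homology fundamental class. Consequently $\|M,g_M\|_{\mathrm{Lip}}$ and $\|N,g_N\|_{\mathrm{Lip}}$ may be computed as the total-variation seminorms of fundamental cycles in measure homology, and it suffices to work there.

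Next, let $U$ denote the common Riemannian universal cover and $G := \mathrm{Isom}(U)$ its isometry group, a locally compact group carrying a Haar measure. Writing $M = \Gamma_M\backslash U$ and $N = \Gamma_N\backslash U$, with $\Gamma_M$ cocompact and $\Gamma_N$ of finite covolume in $G$, I would define the \emph{smearing} homomorphism: it sends a Lipschitz fundamental cycle, lifted to $U$, to a $G$--invariant measure by integrating the $G$--translates against Haar measure, and then pushes this invariant measure down to the quotient. Because $G$ acts by isometries, smearing preserves the Lipschitz constraint and does not increase the total-variation seminorm; because it commutes with the boundary operator, it sends fundamental cycles to fundamental cycles. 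A direct computation then identifies the seminorm of the fundamental class of $M$, divided by $\mathrm{Vol}(M,g_M)$, with the seminorm of a single canonical $G$--invariant measure cycle $z_U$ on $U$ that depends only on the pair $(U,G)$; the identical computation for $N$ yields the same invariant. Equating the two expressions gives the asserted proportionality.

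The main obstacle is the non-compactness of $N$: one must verify that the smeared chains are genuinely locally finite and that the total-variation integrals over a fundamental domain for $\Gamma_N$ converge. This is exactly where the hypotheses of completeness and finite volume are used, to control the geometry near the ends of $N$ so that the integration against Haar measure produces a finite, locally finite fundamental cycle. The other technically delicate point---the isometric comparison between Lipschitz singular and measure homology in the locally finite setting for open manifolds---is the content of \cite{LS09-2}, which I would invoke rather than reprove.
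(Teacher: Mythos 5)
Your outline is correct and follows exactly the smearing/measure-homology argument of Gromov as worked out in detail by L\"oh--Sauer, which is precisely what the paper relies on: the paper itself gives no proof of Theorem \ref{thm:3.1} and simply refers to \cite{Gro82} and \cite{LS09-2}. You correctly locate the two genuinely delicate points (the isometric comparison of Lipschitz locally finite chains with measure chains on an open manifold, and the convergence/local finiteness of the smeared cycle near the ends, where completeness and finite volume enter), so the proposal matches the intended proof.
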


Under a nonpositive curvature condition, L\"{o}h and Sauer \cite{LS09-2} also establish the following proportionality principle.
\begin{theorem}[L\"{o}h and Sauer \cite{LS09-2}]\label{thm:3.2}
Let {$(M,g_M)$} and {$(N,g_N)$} be complete, nonpositively curved Riemannian manifolds of finite volume whose universal covers are isometric. Then
{$$\frac{\| M,g_M \|_\mathrm{Lip}}{\mathrm{Vol}(M,g_M)}= \frac{\| N,g_N \|_\mathrm{Lip}}{\mathrm{Vol}(N,g_N)}.$$}
\end{theorem}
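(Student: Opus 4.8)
The plan is to follow Gromov's smearing strategy, adapted to the Lipschitz and locally finite setting. Write $\tilde{X}$ for the common universal cover and $G=\mathrm{Isom}(\tilde{X})$ for its isometry group, so that $\Gamma_M=\pi_1M$ and $\Gamma_N=\pi_1N$ act on $\tilde{X}$ as lattices (discrete subgroups of finite covolume, since $M$ and $N$ have finite volume). The goal is to show that the ratio $\|M,g_M\|_\mathrm{Lip}/\mathrm{Vol}(M,g_M)$ equals an intrinsic quantity of $\tilde{X}$ alone, and symmetrically for $N$; the proportionality then follows.

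First I would reduce to \emph{straight} simplices. Because $\tilde{X}$ is nonpositively curved and simply connected, it is uniquely geodesic, so one has a geodesic straightening operator on locally finite chains that is $G$-equivariant and chain homotopic to the identity. Crucially, in nonpositive curvature geodesics depend on their endpoints with a Lipschitz bound governed by the curvature, so straightening does not increase the $\ell^1$--norm and sends cycles of finite Lipschitz constant to straight cycles of uniformly finite Lipschitz constant. This lets me compute $\|\cdot\|_\mathrm{Lip}$ using only straight fundamental cycles.

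Next I would pass to (locally finite, Lipschitz) measure homology, in which the fundamental class is represented by a measure on the space of straight simplices $\mathrm{map}(\Delta^n,\tilde{X})$ that is invariant under the deck group. The crucial comparison is that the seminorm of the fundamental class in this measure homology agrees with the Lipschitz simplicial volume; in the closed case this is L\"{o}h's isomorphism between measure homology and singular homology, and here it must be upgraded to the finite-volume Lipschitz setting. Smearing then uses the common identification $\tilde{M}\cong\tilde{N}\cong\tilde{X}$: averaging a $\Gamma_M$-invariant fundamental measure over $\Gamma_M\backslash G$ against the Haar measure produces a fully $G$-invariant measure cycle on straight simplices of $\tilde{X}$ whose seminorm equals $\mathrm{Vol}(M,g_M)$ times a density intrinsic to $\tilde{X}$. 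The identical construction for $N$ produces the same $G$-invariant cycle and the same density, so the two ratios coincide.

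The main obstacle is the non-compactness. Smearing and the measure-homology comparison are essentially formal in the closed case, but for finite-volume manifolds one must verify that the averaged object is genuinely locally finite, has finite $\ell^1$--mass, and has finite Lipschitz constant -- all of which can fail near the cusps. It is precisely the Lipschitz hypothesis that tames the ends: it forces the straight simplices carrying the fundamental class to stay uniformly controlled, so that the smearing integral converges and the resulting cycle remains locally finite. Establishing this integrability and local finiteness at infinity, and checking that the straightening chain homotopy itself respects the locally finite Lipschitz framework, is the delicate technical heart of the argument.
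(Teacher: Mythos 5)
The paper does not prove this statement at all: Theorem \ref{thm:3.2} is imported verbatim from L\"oh--Sauer \cite{LS09-2} and is used as a black box (together with Corollary \ref{cor:1.5}) to deduce the proportionality principle for the ordinary simplicial volume in Corollary \ref{cor:1.6}. So there is no in-paper argument to compare against; what you have written is an attempted reconstruction of the proof in the cited reference.

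As such a reconstruction, your outline does identify the right strategy -- geodesic straightening in nonpositive curvature, passage to a measure-theoretic (smearing) fundamental cycle over $\Gamma\backslash G$ with $G=\mathrm{Isom}(\tilde X)$, and the observation that the Lipschitz hypothesis is what controls the ends -- and this matches the architecture of the L\"oh--Sauer argument. But it is not a proof: the two steps you yourself flag as ``the delicate technical heart'' are precisely the content of \cite{LS09-2} and are only named, not carried out. Concretely: (i) the claim that the seminorm of the fundamental class in a locally finite, Lipschitz version of measure homology computes $\|M,g_M\|_\mathrm{Lip}$ is a nontrivial isometry statement whose closed-manifold analogue (L\"oh, Thurston) already requires real work, and the finite-volume upgrade is exactly where the Lipschitz condition must enter; (ii) the assertion that averaging over $\Gamma_M\backslash G$ against Haar measure yields a locally finite cycle of finite mass requires one to justify that $\Gamma_M$ is a lattice in the (possibly non-transitive, a priori even discrete) isometry group $G$ and that the smearing integral converges near the cusps -- neither is automatic from ``finite volume'' alone. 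Until those two points are established, the argument is a plausible roadmap rather than a proof; the honest course in the present paper is what the authors do, namely cite \cite{LS09-2}.
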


Note that the proportionality principle for the ordinary simplicial volume of open manifolds fails in general.
There are locally symmetric spaces of noncompact type with $\mathbb{Q}$--rank $3$ and $\mathbb{Q}$--rank $1$ whose universal covers are isometric, but one has positive simplicial volume and the other has vanishing simplicial volume. {Here is one such example. Let $M$ be a Hilbert modular variety and $N$ be the Cartesian product of three noncompact hyperbolic surfaces, both of which are covered by $\mathbb{H}^2 \times \mathbb{H}^2 \times \mathbb{H}^2$. Then $\| M\|$ is positive \cite{LS09-1} and $\|N\|=0$ \cite[Section 4.2]{Gro82}.} Hence the proportionality principle does not work. 

{As a corollary of Theorem \ref{thm:3.2}, it can be seen that the geometric simplicial volume of locally symmetric spaces of finite volume and noncompact type is nonzero \cite{LS09-2}. This follows from the fact that the simplicial volume of closed locally symmetric spaces of noncompact type is nonzero, which is proved by Lafont and Schmidt \cite{LS06}. The presence of the proportionality principle is one advantage of the geometric simplicial volume against the ordinary simplicial volume.}

\subsection{Bounded cohomology and $\ell^1$--homology}\label{sec:3.3}
{Gromov in his paper \cite{Gro82} defined the bounded cohomology of topological spaces and proved a number of theorems about it. Let $S_k(X)$ denote the set of all singular $k$--simplices of a topological space $X$.} Then, for a singular $k$--cochain $f \co C_k(X) \rightarrow \mathbb{R}$, the $\ell^\infty$--norm $\|f\|_\infty$ of $f$ is defined by setting $$\| f \|_\infty = \sup \{ |f(\sigma)| \ | \ \sigma \in S_k(X)\}.$$

{A singular cochain is said to be \emph{bounded} if its $\ell^\infty$--norm is finite, i.e., its set of values on singular simplices is bounded.}
Let $C^k_b(X)$ be the set consisting of bounded singular $k$--cochains. It is easy to see that $C^*_b(X)$ is preserved by the coboundary operator. Hence, $C^*_b(X)$ is a cochain complex and gives rise to a cohomology. This cohomology is called \emph{the bounded cohomology of $X$}, denoted by $H^*_b(X)$. The $\ell^\infty$--norm induces an $\ell^\infty$--seminorm on the bounded cohomology of $X$ as follows: For a bounded cohomology class $\varphi \in H^k_b(X)$, the $\ell^\infty$--seminorm $\| \varphi \|_\infty$ of $\varphi$ is defined by
$$\| \varphi \|_\infty = \inf \{ \|f\|_\infty \ | \ f \in C^k_b(X) \text{ cocycle representing } \varphi \}.$$
{For more details, we refer the reader to \cite[Section 1]{Gro82}.}

Now, we recall the $\ell^1$--homology of $X$. Let $c=\sum_{i=1}^\infty a_i \sigma_i$ be a formal infinite linear sum of singular $k$--simplices in $X$ with coefficients in $\mathbb{R}$ where $\sigma_i$'s are distinct singular $k$--simplices for $k \in \mathbb{N}$. Define the $\ell^1$--norm $\|c\|_1$ of $c$ by $\|c\|_1 =\sum_{i=1}^\infty |a_i|$. Consider the set $C^{\ell^1}_k(X)$ defined by
$$C^{\ell^1}_k(X) = \left\{ c=\sum_{i=1}^\infty a_i \sigma_i \ \Bigg| \
\|c\|_1 <\infty \right\}.$$
The usual boundary operator is well defined on $C_*^{\ell^1}(X)$. Then, the \emph{$\ell^1$--homology $H^{\ell^1}_*(X)$ of $X$} is defined as the homology of the singular $\ell^1$--chain complex $C^{\ell^1}_*(X)$ of $X$. Note that the $\ell^1$--norm gives rise to a seminorm $\|\cdot \|_1$ in $H^{\ell^1}_*(X)$ as usual.

Note that the $\ell^\infty$--seminorm $\| \cdot\|_\infty$ on bounded cohomology and the $\ell^1$--seminorm $\| \cdot \|_1$ on $\ell^1$--homology are dual to each other.

{Gromov \cite[Section 1.1]{Gro82} realized that the $\ell^1$--seminorms on singular homology can be reformulated in terms of the $\ell^\infty$--seminorms on singular cohomology. Similarly, the seminorms on $\ell^1$--homology and bounded cohomology are intertwined as follows.} For each $\alpha \in H_k^{\ell^1}(X)$,
$$\| \alpha \|_1 = \sup \left\{ \frac{1}{\|\varphi \|_\infty} \ \Bigg| \ \varphi \in H^k_b(X) \text{ and } \langle \varphi, \alpha \rangle =1 \right\}.$$
For more details about this, see \cite[Theorem 3.8]{Lo07}.

\subsection{Simplicial volume of open manifolds}\label{sec:3.4}
Let $M$ be a connected, oriented, open $n$--manifold. Recall that $M$ has the fundamental class $[M]$ in $H^\mathrm{lf}_n(M)$. Let $[M]^{\ell^1}$ be the set of all $\ell^1$--homology classes that are represented by at least one locally finite fundamental cycle with finite $\ell^1$--norm. Then, the simplicial volume of $M$ can be computed in terms of $\ell^1$--homology as follows {\cite[Section 5.3]{Lo07}} :
$$ \| M \| = \inf \{ \| \alpha \|_1 \ | \ \alpha \in [M]^{\ell^1} \subset H^{\ell^1}_n(M) \}.$$
Since the $\ell^\infty$--seminorm in the bounded cohomology of $M$ is dual to the $\ell^1$--seminorm in the $\ell^1$--homology of $M$, the simplicial volume of $M$ can be computed by
$$ \| M \| = \inf_{\alpha \in [M]^{\ell^1}} \sup \left\{ \frac{1}{\| \varphi \|_\infty} \ \bigg| \ \varphi \in  H^n_b(M)\text{ and }\langle \varphi, \alpha \rangle=1 \right\}.$$
In the same way, the geometric simplicial volume of a Riemannian manifold $M$ equipped with a Riemannian metric $g_M$ can be reformulated by
$$ {\| M,g_M \|_\mathrm{Lip} = \inf_{\alpha \in [M,g_M]^{\ell^1}_\mathrm{Lip}}} \sup \left\{ \frac{1}{\| \varphi \|_\infty} \ \bigg| \ \varphi \in  H^n_b(M)\text{ and }\langle \varphi, \alpha \rangle=1 \right\},$$
where {$[M,g_M]^{\ell^1}_\mathrm{Lip}$} is the set of all $\ell^1$--homology classes that are represented by at least one locally finite fundamental cycle with finite Lipschitz constant and finite $\ell^1$--norm.
We refer the reader to \cite[Section 5.3]{Lo07} for more details.

\subsection{Compact manifolds with amenable boundary}\label{sec:3.5}
Let $V$ be a connected, oriented, compact $n$--manifold with boundary $\partial V$. Suppose that $n \geq 2$ and each path component of $\partial V$ has amenable fundamental group. Let $M$ be the interior of $V$. Choose {an open collar neighborhood} $U$ of $\partial V$. Then, $K = V-U$ is a compact submanifold of $M$ that is a deformation retract of $V$.
Clearly, each path component of $M-K$ also has an amenable fundamental group since $M-K$ is homeomorphic to $\partial V \times (0,1)$. According to Theorem \ref{thm:2.2}, the pullback map
$$ i^* \co H^n_b(M,M-K) \rightarrow H^n_b(M)$$
induced from the canonical inclusion $i \co (M,\emptyset) \rightarrow (M,M-K)$ is an isometric isomorphism.

Consider the canonical inclusion maps
$$j_1 \co (V, \partial V) \rightarrow (V,U) \text{ and } j_2 \co (M,M-K) \rightarrow (V,U).$$

{Noting $U$ and $M-K$ are homeomorphic to $\partial V \times [0,1)$ and $\partial V \times (0,1)$ respectively,
it is clear that both $j_1$ and $j_2$ are homotopy equivalences. Hence, these maps induce isometric isomorphisms in both relative bounded cohomology and relative singular homology. Then we have
\begin{eqnarray}
\| V ,\partial V \| = \| V, U \| = \| M,M-K \|.
\end{eqnarray}
where $\|V,U\|$ and $\|M,M-K \|$ are the seminorms of the fundamental classes $[V,U]\in H_n(V,U)$ and $[M,M-K]\in H_n(M,M-K)$ respectively.}

Let $\alpha \in [M]^{\ell^1}$.
Due to the isomorphism $i^* \co H^n_b(M,M-K) \rightarrow H^n_b(M)$, we can use $H^n_b(M,M-K)$ to characterize bounded cohomology classes $\varphi \in H^n_b(M)$ with $\langle \varphi, \alpha \rangle =1$ as follows.

\begin{lemma}\label{lem:3.3}
{Let $V$ be a connected, oriented, compact manifold of dimension $n$ and let $M$ be the interior of $V$.
Let us choose a compact submanifold $K$ of $M$ as above. If the inclusion $(V,\emptyset) \rightarrow (V,\partial V)$ induces an isometric isomorphism $H^n_b(V,\partial V) \rightarrow H^n_b(V)$, then for any $\ell^1$--homology class $\alpha \in [M]^{\ell^1}$, we have} $$\{\varphi \in H^n_b(M) \ | \ \langle \varphi, \alpha \rangle =1 \} = \{i^*(\beta) \ | \ \beta\in H^n_b\left(M,M-K\right), \langle \beta, [M,M-K] \rangle =1 \}.$$
\end{lemma}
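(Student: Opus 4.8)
The plan is to turn the isomorphism $i^*$ into a bijection between the two sets and then to check that the two normalisation conditions match under an adjunction formula, so that the whole statement reduces to a single pairing identity.

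First I would transfer the hypothesis from $(V,\partial V)$ to $(M,M-K)$. The inclusions $(V,\emptyset)\to(V,\partial V)$ and $(M,\emptyset)\to(M,M-K)$ fit, after composing with the homotopy equivalences $j_1\co(V,\partial V)\to(V,U)$ and $j_2\co(M,M-K)\to(V,U)$ and with the inclusion $M\hookrightarrow V$, into a commutative square; since $j_1$, $j_2$ and $M\hookrightarrow V$ induce isometric isomorphisms in (relative) bounded cohomology, the assumed isometric isomorphism $H^n_b(V,\partial V)\to H^n_b(V)$ forces $i^*\co H^n_b(M,M-K)\to H^n_b(M)$ to be an isomorphism as well. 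In particular $i^*$ is surjective, so the right-hand set equals the image under $i^*$ of $\{\beta\in H^n_b(M,M-K):\langle\beta,[M,M-K]\rangle=1\}$, and the lemma reduces to the identity
\[
\langle i^*(\beta),\alpha\rangle=\langle\beta,[M,M-K]\rangle\qquad\text{for every }\beta\in H^n_b(M,M-K).
\]

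To prove this identity I would evaluate both pairings on carefully chosen representatives. Represent $\beta$ by a bounded cocycle $g$ in the relative complex $C^n_b(M,M-K)$, i.e.\ a bounded singular cocycle on $M$ vanishing on every simplex whose image lies in $M-K$; as $C^*_b(M,M-K)$ is a subcomplex of $C^*_b(M)$, this same $g$ represents $i^*(\beta)$. By the definition of $[M]^{\ell^1}$, represent $\alpha$ by a cycle $c=\sum_i a_i\sigma_i$ that is at once a locally finite fundamental cycle and $\ell^1$-summable, $\sum_i|a_i|<\infty$. Since $K$ is compact and $c$ is locally finite, only finitely many $\sigma_i$ meet $K$, so the projection $\bar c$ of $c$ to $C_n(M,M-K)$ (which deletes the simplices supported in $M-K$) is a finite relative cycle; here I would invoke the standard fact that the natural map $H^{\mathrm{lf}}_n(M)\to H_n(M,M-K)$ carries the locally finite fundamental class to the relative fundamental class, so that $\bar c$ represents $[M,M-K]$.

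With these choices the computation collapses: because $g$ kills every simplex with image in $M-K$, the absolutely convergent series $\sum_i a_i\,g(\sigma_i)$ reduces to the finite sum over the simplices meeting $K$, which is exactly $g(\bar c)$, whence
\[
\langle i^*(\beta),\alpha\rangle=\sum_i a_i\,g(\sigma_i)=g(\bar c)=\langle\beta,[M,M-K]\rangle.
\]
As the Kronecker pairings are well defined on (co)homology, this proves the identity, and the two inclusions follow at once (the inclusion $\subseteq$ using surjectivity of $i^*$, the inclusion $\supseteq$ directly). The step I expect to be most delicate is reconciling the two a priori different pairings --- bounded cohomology against the infinite $\ell^1$-cycle $\alpha$ on one side, relative bounded cohomology against the finite relative class $[M,M-K]$ on the other --- and in particular justifying that $\alpha$ truncates to $[M,M-K]$; the vanishing of the relative representative $g$ off $K$ is precisely what makes this truncation lossless.
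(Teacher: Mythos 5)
Your proposal is correct and follows essentially the same route as the paper: reduce to the isomorphism $i^*\co H^n_b(M,M-K)\to H^n_b(M)$ via the homotopy equivalences with $(V,\partial V)$, represent $\beta$ by a relative cocycle vanishing on $M-K$, and use local finiteness to truncate the $\ell^1$ fundamental cycle to a finite relative cycle representing $[M,M-K]$, so that the two pairings agree. Your explicit isolation of the identity $\langle i^*(\beta),\alpha\rangle=\langle\beta,[M,M-K]\rangle$ is a slightly cleaner packaging of the same computation the paper performs in both directions.
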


\begin{proof}
{First, note that the inclusion $i:(M,\emptyset)\rightarrow (M,M-K)$ induces an isometric isomorphism $i^* : H^n_b(M,M-K) \rightarrow H^n_b(M)$ because $(M,M-K)$ and $M$ are homotopy equivalent to $(V,\partial V)$ and $V$ respectively.}

Suppose that $\langle \varphi,\alpha \rangle =1$ for $\varphi \in H^n_b(M)$.
Let $c=\sum_{i=1}^\infty a_i \sigma_i$ be a locally finite fundamental cycle with finite $\ell^1$--norm representing $\alpha$.
Since $i^* \co H^n_b(M,M-K) \rightarrow H^n_b(M)$ is an isomorphism, there exists a
cohomology class $\beta \in H^n_b(M,M-K)$ such that $i^*(\beta)=\varphi$.

Let $z \in C^n_b(M,M-K)$ be a cocycle representing $\beta$.
It follows from the assumption $\langle \varphi,\alpha \rangle =1$ that $\langle i^*(z), c \rangle =1$. Since the cocycle $z$ has compact support $K$, we have $${ 1=\langle i^*(z), c \rangle = \langle z, c|_K \rangle }$$ where $c|_K=\sum_{\mathrm{im} \sigma_i \cap K \neq \emptyset} a_i \sigma_i$.
For a locally finite fundamental cycle $c$, it is a standard fact that
$c|_L=\sum_{\mathrm{im} \sigma_i \cap L \neq \emptyset} a_i \sigma_i$ represents the fundamental class $[M,M-L]$ in $H_n(M,M-L)$ for any compact subset $L$ of $M$.
Hence, $c|_K$ represents the fundamental class $[M,M-K]$ in $H_n(M,M-K)$.
Observe that $z$ is a cocycle in $C^n(M,M-K)$ and $c|_K$ is a cycle in $C_n(M,M-K)$. From this point of view, we have $$ {1=\langle z,c|_K \rangle = \langle \beta, [M,M-K] \rangle},$$
where $\langle \beta, [M,M-K] \rangle$ is the Kronecker product in the sense of $$\langle \cdot, \cdot \rangle \co H^n(M,M-K) \otimes H_n(M,M-K) \rightarrow \mathbb{R}.$$

Conversely, suppose that $\langle \beta, [M,M-K] \rangle =1$ for $\beta \in H^n_b(M,M-K)$. In a similar way as above, one can conclude that
$${1=\langle i^*(\beta), \alpha \rangle = \langle \beta, [M,M-K] \rangle},$$
for all $\alpha \in [M]^{\ell^1}$. Therefore, this completes the proof.
\end{proof}

If $M$ is a manifold as in the assumptions of Lemma \ref{lem:3.3}, then Lemma \ref{lem:3.3} implies that the set $\{\varphi \in H^n_b(M) \ | \ \langle \varphi, \alpha \rangle =1 \}$ for $\alpha \in [M]^{\ell^1}$ is independent of the choice of $\alpha \in [M]^{\ell^1}$. Hence, one can see that for any $\alpha \in [M]^{\ell^1}$,
{\setlength\arraycolsep{2pt}
\begin{eqnarray*}
\| M \| &=& \sup \left\{ \frac{1}{\| \varphi \|_\infty} \ \bigg| \ \varphi \in  H^n_b(M)\text{ and }\langle \varphi, \alpha \rangle=1 \right\}.
\end{eqnarray*}
Furthermore if $[M,g_M]^{\ell^1}_\mathrm{Lip}$ is not empty for a given Riemannian metric $g_M$ on $M$, it follows that $\| M \| =\|M,g_M\|_\mathrm{Lip}$ (See Section \ref{sec:3.4}). From this point of view, it is not difficult to see that for a closed Riemannian manifold, its simplicial volume is equal to its geometric simplicial volume as follows. 

Let $M$ be a closed manifold with a given Riemannian metric $g_M$. Then it is clear that $M$ satisfies all the conditions of Lemma \ref{lem:3.3}. 
Since $M$ is a smooth manifold, the smooth singular homology of $M$ is isometrically isomorphic to the singular homology of $M$ (\cite[Section D.1.2]{Lo07}). Hence there is a fundamental cycle of $M$ which is a finite sum of smooth singular simplices.
A smooth fundamental cycle of $M$ induces an $\ell^1$--homology class of $M$ with finite Lipschitz constant and moreover represents the locally finite fundamental class of $M$. This implies that $[M,g_M]^{\ell^1}_\mathrm{Lip}$ is not empty. Then, as we mentioned above, it immediately follows that $\| M \| =\|M,g_M\|_\mathrm{Lip}$.

{
\begin{theorem}\label{thm:3.4}
Let $V$ be a connected, oriented, compact $n$--manifold and let $M$ be the interior of $V$.
 If the inclusion $(V,\emptyset) \rightarrow (V,\partial V)$ induces an isometric isomorphism $H^n_b(V,\partial V) \rightarrow H^n_b(V)$ and $\| M\|$ is finite, then
$$\| V,\partial V \| = \| M\|.$$
If moreover, $g_M$ is a Riemannian metric on $M$ and $\| M, g_M\|_\mathrm{Lip}$ is finite, then
$$\| V,\partial V \| = \| M\| = \| M,g_M \|_\mathrm{Lip}.$$
\end{theorem}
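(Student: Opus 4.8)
The plan is to prove the two equalities by reducing everything to the duality between the $\ell^1$--seminorm on homology and the $\ell^\infty$--seminorm on bounded cohomology (Sections \ref{sec:3.3}, \ref{sec:3.4}) together with Lemma \ref{lem:3.3}. The finiteness hypotheses are present precisely to guarantee that the relevant sets of admissible fundamental cycles are nonempty: finiteness of $\|M\|$ means $[M]^{\ell^1}\neq\emptyset$, and finiteness of $\|M,g_M\|_\mathrm{Lip}$ means $[M,g_M]^{\ell^1}_\mathrm{Lip}\neq\emptyset$. Granting the first of these, the second equality is the easy one: since $[M,g_M]^{\ell^1}_\mathrm{Lip}\subseteq[M]^{\ell^1}$ and Lemma \ref{lem:3.3} shows that the normalizing set $\{\varphi\in H^n_b(M)\mid\langle\varphi,\alpha\rangle=1\}$ does not depend on the choice of $\alpha\in[M]^{\ell^1}$, the infimum over $\alpha\in[M,g_M]^{\ell^1}_\mathrm{Lip}$ defining $\|M,g_M\|_\mathrm{Lip}$ is an infimum of the constant value $\|M\|$, hence equals $\|M\|$. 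I would simply record this, exactly as in the discussion preceding the theorem. Thus the substance of the theorem is the first equality $\|V,\partial V\|=\|M\|$.

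For that I would route through the relative seminorm $\|M,M-K\|$, where $K=V-U$ is the compact core fixed in Section \ref{sec:3.5}. First I would recall from Section \ref{sec:3.5} that the homotopy equivalences $j_1\co(V,\partial V)\to(V,U)$ and $j_2\co(M,M-K)\to(V,U)$ identify the relative fundamental classes isometrically, so $\|V,\partial V\|=\|M,M-K\|$; it then suffices to show $\|M\|=\|M,M-K\|$. Fixing any $\alpha\in[M]^{\ell^1}$, I would start from $\|M\|=\sup\{1/\|\varphi\|_\infty\mid\varphi\in H^n_b(M),\,\langle\varphi,\alpha\rangle=1\}$ and apply Lemma \ref{lem:3.3} to replace the indexing set by $\{i^*(\beta)\mid\beta\in H^n_b(M,M-K),\,\langle\beta,[M,M-K]\rangle=1\}$. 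Because $i^*\co H^n_b(M,M-K)\to H^n_b(M)$ is an isometric isomorphism, $\|i^*(\beta)\|_\infty=\|\beta\|_\infty$, and therefore
$$\|M\|=\sup\left\{\frac{1}{\|\beta\|_\infty}\ \bigg|\ \beta\in H^n_b(M,M-K),\ \langle\beta,[M,M-K]\rangle=1\right\}.$$

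The last step is to recognize the right-hand side as $\|M,M-K\|$. Here I would invoke the duality principle of Section \ref{sec:3.3} in its relative form (see \cite{Lo07}): the $\ell^1$--seminorm of the relative fundamental class $[M,M-K]\in H_n(M,M-K)$ is computed by pairing against $H^n_b(M,M-K)$, so that $\|M,M-K\|$ equals exactly the displayed supremum, with the usual convention that the supremum over the empty set is $0$ (covering the degenerate case where no $\beta$ with $\langle\beta,[M,M-K]\rangle=1$ exists and both seminorms vanish). Chaining the identities yields $\|V,\partial V\|=\|M,M-K\|=\|M\|$, which combined with the first paragraph completes the proof.

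I expect the main obstacle to be the careful justification of this last relative duality, namely the identification of the displayed supremum with $\|M,M-K\|$: one must check that the Kronecker pairing $H^n_b(M,M-K)\otimes H_n(M,M-K)\to\mathbb R$ genuinely detects the $\ell^1$--seminorm of $[M,M-K]$ and that the normalization $\langle\beta,[M,M-K]\rangle=1$ is attainable whenever $\|M,M-K\|>0$. All the remaining ingredients—the isometric identification $\|V,\partial V\|=\|M,M-K\|$, the isometry of $i^*$, and the $\alpha$--independence underlying the Lipschitz equality—are either explicitly available in the excerpt or follow immediately from Lemma \ref{lem:3.3}.
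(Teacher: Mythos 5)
Your proposal is correct and follows essentially the same route as the paper: reduce to the compact core $K=V-U$ via the homotopy equivalences $j_1,j_2$, use Lemma \ref{lem:3.3} together with the isometry of $i^*\co H^n_b(M,M-K)\to H^n_b(M)$ to rewrite the dual formula for $\|M\|$ as the relative dual formula for $\|M,M-K\|=\|V,\partial V\|$, and obtain the Lipschitz equality from the $\alpha$--independence of the normalizing set. The relative duality step you flag as the main obstacle is exactly the one the paper also relies on (implicitly, via \cite{Lo07}), so nothing is missing.
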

}

\begin{proof}
By the assumption that $\|M\|$ is finite, $[M]^{\ell^1}$ is not the empty set.
From Lemma \ref{lem:3.3}, we have
{\setlength\arraycolsep{2pt}
\begin{eqnarray*}
\| M \| &=& \inf_{\alpha \in [M]^{\ell^1}} \sup \left\{ \frac{1}{\| \varphi \|_\infty} \ \bigg| \ \varphi \in  H^n_b(M)\text{ and }\langle \varphi, \alpha \rangle=1 \right\} \\
&=& \sup \left\{ \frac{1}{\| i^*(\beta) \| _\infty} \ \bigg| \ \beta \in H^n_b(M,M-K) \text{ and } \langle \beta, [M,M-K] \rangle=1 \right\} \\
& =& \sup \left\{ \frac{1}{\| \beta \| _\infty} \ \bigg| \ \beta \in H^n_b(M,M-K) \text{ and } \langle \beta, [M,M-K] \rangle=1 \right\}\\
&=& \| M,M-K \| \\
&=& \| V,\partial V \|.
\end{eqnarray*}}
The third equation is due to the fact that $i^* \co H_b^n\left(M,M-K\right)\rightarrow H_b^n\left(M\right)$ is an isometry.
{Note that the second equation follows from Lemma \ref{lem:3.3} only if $[M]^{\ell^1}$ is not empty and it does not hold if $[M]^{\ell^1}$ is empty. This is why we need to assume that $\|M\|$ is finite.} 

As mentioned in Section \ref{sec:3.4}, for a given Riemannian metric $g_M$ on $M$, its geometric simplicial volume $\|M,g_M \|_\mathrm{Lip}$ can be computed by
$$ \| M,g_M \|_\mathrm{Lip} = \inf_{\alpha \in [M,g_M]^{\ell^1}_\mathrm{Lip}} \sup \left\{ \frac{1}{\| \varphi \|_\infty} \ \bigg| \ \varphi \in  H^n_b(M)\text{ and }\langle \varphi, \alpha \rangle=1 \right\}.$$
{
If $\| M,g_M\|_\mathrm{Lip}$ is finite, the set $[M,g_M]^{\ell^1}_\mathrm{Lip}$ is not the empty set. Then by choosing an element in $[M,g_M]^{\ell^1}_\mathrm{Lip}$, it follows from Lemma \ref{lem:3.3} that $\| M,g_M\|_\mathrm{Lip}=\|M\|$, which proves the second statement in this theorem.}
\end{proof}

{In the case that all boundary components of $V$ have amenable fundamental group and $\dim V \geq 2$, the inclusion $(V,\emptyset) \rightarrow (V,\partial V)$ induces an isometric isomorphism $H^n_b(V,\partial V) \rightarrow H^n_b(V)$ due to Theorem \ref{thm:2.2}. Furthermore, it follows from L\"{o}h's finiteness criterion for simplicial volume \cite[Theorem 6.1]{Lo07} that $\|M\|$ is finite. Hence, we obtain the following corollary from Theorem \ref{thm:3.4}.
\begin{corollary}\label{cor:3.5}
Let $V$ be a connected, oriented, compact manifold of dimension at least $2$ and let $M$ be the interior of $V$. If each path component of the boundary of $V$ has amenable fundamental group, then
$$\| V,\partial V \| = \| M\|.$$
If moreover, $g_M$ is a Riemannian metric on $M$ and $\| M, g_M\|_\mathrm{Lip}$ is finite, then
$$\| V,\partial V \| = \| M\| = \| M,g_M \|_\mathrm{Lip}.$$
\end{corollary}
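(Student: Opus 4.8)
The plan is to derive Corollary \ref{cor:3.5} directly from Theorem \ref{thm:3.4} by verifying that its two hypotheses hold automatically under the amenability assumption on $\partial V$. Theorem \ref{thm:3.4} requires (i) that the inclusion $(V,\emptyset) \rightarrow (V,\partial V)$ induce an isometric isomorphism $H^n_b(V,\partial V) \rightarrow H^n_b(V)$, and (ii) that $\| M\|$ (and, for the second statement, $\| M,g_M\|_\mathrm{Lip}$) be finite. So the entire work of the corollary is to establish these two facts; no new seminorm estimates are needed beyond what Theorem \ref{thm:3.4} already packages.

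First I would dispatch hypothesis (i). Since each path component of $\partial V$ has amenable fundamental group, Theorem \ref{thm:2.2} applied to the pair $(X,A)=(V,\partial V)$ yields an isometric isomorphism $H^*_b(V,\partial V) \rightarrow H^*_b(V)$ in all degrees $*\ge 2$. Because $\dim V = n \ge 2$, the degree $n$ in which we need the isomorphism lies in this range, so the required isometric isomorphism $H^n_b(V,\partial V)\rightarrow H^n_b(V)$ is immediate. This is precisely the content of Theorem \ref{thm:2.2}, so no separate argument is required here.

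Next I would handle hypothesis (ii), the finiteness of $\| M\|$. This is where I expect the only real input beyond the already-proved theorems to enter: one invokes L\"{o}h's finiteness criterion \cite[Theorem 6.1]{Lo07}, which guarantees that the simplicial volume of the interior $M$ of a compact manifold $V$ is finite precisely when the boundary has amenable fundamental group (more precisely, when $\pi_1$ of each boundary component is amenable and maps appropriately into $\pi_1 V$). The amenability hypothesis supplies exactly this condition, so $\| M\| < \infty$. With both hypotheses of Theorem \ref{thm:3.4} verified, its conclusion $\| V,\partial V\| = \| M\|$ follows at once, and if in addition $\| M,g_M\|_\mathrm{Lip}$ is assumed finite for a Riemannian metric $g_M$, the second conclusion $\| V,\partial V\| = \| M\| = \| M,g_M\|_\mathrm{Lip}$ follows as well.

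The main (and essentially only) obstacle is confirming that the cited finiteness criterion genuinely applies under the stated amenability hypothesis, i.e.\ that amenability of the fundamental group of each boundary component is the correct form of the hypothesis in \cite[Theorem 6.1]{Lo07}. Everything else is a direct citation: hypothesis (i) is Theorem \ref{thm:2.2} in degree $n\ge 2$, and the conclusion is the verbatim statement of Theorem \ref{thm:3.4}. Thus the corollary is proved by simply checking that the amenability assumption feeds both the bounded-cohomology isometry and the finiteness of $\| M\|$ into the machinery of Theorem \ref{thm:3.4}.
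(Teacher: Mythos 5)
Your proposal is correct and follows exactly the paper's own argument: verify the isometric isomorphism hypothesis via Theorem \ref{thm:2.2} (using $n\ge 2$), obtain finiteness of $\| M\|$ from L\"{o}h's finiteness criterion \cite[Theorem 6.1]{Lo07}, and then apply Theorem \ref{thm:3.4}. No differences worth noting.
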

}

\begin{remark}
Theorem \ref{thm:3.4} fails in the case of one-dimensional manifolds. Here is a counterexample. Consider the one-dimensional compact manifold $V=[0,1]$. Then, the relative simplicial volume $\| V, \partial V \|=1$. However,
the interior $M$ of $V$ is
the open interval $(0,1)$ and hence, $\| M \| = \infty$. Thus, Theorem \ref{thm:3.4} does not hold for one-dimensional manifolds. In fact, this is because Theorem \ref{thm:2.2} does not work in degree $1$. {Also, the assumption that $\|M,g_M\|_\mathrm{Lip}$ is finite plays an essential role in Theorem \ref{thm:3.4}. For example, even though the Poincar\'{e} hyperbolic disk $(\mathbb{H}^2,h)$ with the hyperbolic metric $h$ satisfies all assumptions except that $\|\mathbb{H}^2,h\|_\mathrm{Lip}$ is finite, the second statement of the theorem fails. More precisely, $\|\mathbb{H}^2,h\|_\mathrm{Lip}=\infty$ since $\mathbb{H}^2$ has infinite volume. However $\|\mathbb{H}^2\|=\|\mathbb{H}^2, \partial \mathbb{H}^2 \|=0. $}
\end{remark}

Let {$(M,g_M)$} be a connected, oriented, complete, pinched negatively curved manifold of finite volume.
Then, it is well known that $M$ is tame. Thus, we can suppose that $M$ is homeomorphic to the interior of a compact manifold $V$. Moreover, each path component of $\partial V$ has a virtually nilpotent fundamental group which injects in $\pi_1 V$.

\begin{corollary}\label{cor:3.6}
Let {$(M,g_M)$} be a connected, oriented, complete, pinched negatively curved manifold of finite volume that is homeomorphic to the interior of a compact manifold $V$. Then, $$\| V, \partial V \| = \| M \| = \| M,g_M\|_\mathrm{Lip}.$$
\end{corollary}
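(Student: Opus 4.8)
The plan is to deduce Corollary \ref{cor:3.6} directly from the earlier results without reproving any of the hard bounded-cohomology machinery. The key observation is that a complete, pinched negatively curved manifold $M$ of finite volume is tame, hence homeomorphic to the interior of a compact manifold $V$, and each path component of $\partial V$ has virtually nilpotent fundamental group. Since virtually nilpotent groups are amenable, $V$ satisfies the hypothesis that every path component of $\partial V$ has amenable fundamental group. Provided $\dim V \geq 2$ (which holds since a negatively curved manifold of finite volume with nonempty cusps has dimension at least $2$), Corollary \ref{cor:3.5} applies and yields $\| V,\partial V\| = \| M\|$ together with the Lipschitz equality $\| M\| = \| M,g_M\|_\mathrm{Lip}$ \emph{as soon as} $\| M,g_M\|_\mathrm{Lip}$ is known to be finite.

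Thus the first step is to verify the amenability hypothesis: I would cite the standard tameness result for pinched negatively curved manifolds of finite volume and the structure of their cusps, noting that each cusp cross-section has virtually nilpotent (hence amenable) fundamental group and that this injects into $\pi_1 V$. The $\pi_1$-injectivity is not strictly needed for Corollary \ref{cor:3.5}, but it is part of the standard picture. With amenability established, Corollary \ref{cor:3.5} immediately gives the topological equality $\| V,\partial V\| = \| M\|$, since L\"{o}h's finiteness criterion guarantees $\| M\|<\infty$.

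The remaining and only substantive point is to show that the geometric (Lipschitz) simplicial volume $\| M,g_M\|_\mathrm{Lip}$ is finite, so that the second statement of Corollary \ref{cor:3.5} applies and produces $\| M\| = \| M,g_M\|_\mathrm{Lip}$. I expect this to be the main obstacle. For this I would invoke the proportionality principle of L\"{o}h and Sauer, Theorem \ref{thm:3.2}, which asserts that for complete, nonpositively curved Riemannian manifolds of finite volume with isometric universal covers the ratio $\| \cdot,g\|_\mathrm{Lip}/\mathrm{Vol}(\cdot,g)$ is a constant. A pinched negatively curved manifold is in particular nonpositively curved, and comparing $(M,g_M)$ with any closed manifold having the same universal cover (or directly bounding the Lipschitz volume via the finite volume and the proportionality constant) shows that $\| M,g_M\|_\mathrm{Lip}$ is finite, being a finite multiple of $\mathrm{Vol}(M,g_M)<\infty$.

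Concretely, since $M$ has finite volume and the proportionality constant from Theorem \ref{thm:3.2} is finite, we get $\| M,g_M\|_\mathrm{Lip} = c \cdot \mathrm{Vol}(M,g_M) < \infty$, where $c$ is the proportionality constant determined by the common universal cover. Feeding this finiteness into the second clause of Corollary \ref{cor:3.5} then yields $\| M\| = \| M,g_M\|_\mathrm{Lip}$, and combining with the already-established $\| V,\partial V\| = \| M\|$ completes the proof of the chain of equalities $\| V,\partial V\| = \| M\| = \| M,g_M\|_\mathrm{Lip}$. The entire argument is therefore a clean concatenation: amenability of cusps triggers Corollary \ref{cor:3.5}, and the nonpositive-curvature proportionality principle supplies the finiteness of the Lipschitz volume that the corollary's second statement requires.
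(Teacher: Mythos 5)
Your reduction to Corollary \ref{cor:3.5} (amenability of the virtually nilpotent cusp groups, L\"oh's finiteness criterion for $\|M\|$) matches the paper exactly, and you correctly identified that the only substantive remaining point is the finiteness of $\|M,g_M\|_{\mathrm{Lip}}$. But your argument for that finiteness has a genuine gap. Theorem \ref{thm:3.2} only asserts that the ratio $\|\cdot,g\|_{\mathrm{Lip}}/\mathrm{Vol}(\cdot,g)$ is the \emph{same} for two finite-volume nonpositively curved manifolds with isometric universal covers; it does not assert that this common value is finite, and a priori both sides of the equality could be $\infty$. To extract finiteness from it you would need some \emph{other} manifold with the same universal cover whose Lipschitz simplicial volume you already know to be finite --- you propose a closed one, but for a general pinched negatively curved metric no closed manifold with the same universal cover need exist (the isometry group of the universal cover may coincide with the deck group of $M$, which is not cocompact; cocompact quotients are guaranteed only in the locally symmetric case). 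So the phrase ``the proportionality constant from Theorem \ref{thm:3.2} is finite'' is exactly what is unproved, and the parenthetical ``directly bounding the Lipschitz volume via \dots\ the proportionality constant'' is circular.

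The paper closes this gap differently: it invokes Gromov's volume estimate, namely that for every complete $n$--dimensional Riemannian manifold $(N,g_N)$ with $Sec(N,g_N)\le 1$ and $Ricci(N,g_N)\ge -(n-1)$ there is a constant $C_n>0$ with $\|N,g_N\|_{\mathrm{Lip}}\le C_n\cdot\mathrm{Vol}(N,g_N)$. A pinched negatively curved metric satisfies these two-sided curvature bounds after rescaling, and $\mathrm{Vol}(M,g_M)<\infty$ by hypothesis, so $\|M,g_M\|_{\mathrm{Lip}}<\infty$ with no reference to any auxiliary quotient. You should replace your appeal to Theorem \ref{thm:3.2} by this curvature-based estimate (Theorem \ref{thm:3.2} is what the paper uses afterwards for Corollary \ref{cor:3.7}, the proportionality principle, not for the finiteness step here).
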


\begin{proof}
{Due to Corollary \ref{cor:3.6}, it suffices to show that $\|M,g_M\|_\mathrm{Lip}$ is finite. In fact, this follows from an estimate of Gromov for the geometric simiplicial volume: For every complete $n$--dimensional Riemannian manifold $(N,g_N)$ with sectional curvature $Sec(N,g_N)\leq 1$ and Ricci curvature $Ricci(N,g_N)\geq -(n-1)$, there is a constant $C_n>0$ such that
$$ \|N,g_N\|_\mathrm{Lip} \leq C_n \cdot \mathrm{Vol}(N,g_N).$$ This implies that $\|M,g_M\|_\mathrm{Lip}$ is finite.}
\end{proof}

Recall that the proportionality principle of simplicial volume for open manifolds fails in general. It is known, however, that the proportionality principle for $\mathbb{R}$--rank $1$ locally symmetric spaces holds (\cite{Ku12}). Now, we establish the proportionality principle for connected, complete, pinched negatively curved manifolds of finite volume by Theorems \ref{thm:3.2} and \ref{thm:3.4}, which generalizes the proportionality principle for $\mathbb{R}$--rank $1$ locally symmetric spaces.

\begin{corollary}\label{cor:3.7}
Let {$(M,g_M)$ and $(N,g_N)$} be connected, complete, pinched negatively curved manifolds of finite volume whose universal covers are isometric. Then
$$\frac{\| M \|}{\mathrm{Vol}(M,g_M)} = \frac{\| N \|}{\mathrm{Vol}(N,g_N)}.$$
\end{corollary}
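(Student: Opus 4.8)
The plan is to reduce the statement about ordinary simplicial volume to the proportionality principle for the geometric (Lipschitz) simplicial volume, which is already available to us from Theorem \ref{thm:3.2}. The key observation is that the hypotheses of Corollary \ref{cor:3.7} are exactly those of Corollary \ref{cor:3.6}: both $M$ and $N$ are connected, complete, pinched negatively curved manifolds of finite volume, and such manifolds are tame, hence each is homeomorphic to the interior of a compact manifold with boundary whose path components have virtually nilpotent (in particular amenable) fundamental groups. This tameness and the amenability of the cusp groups are precisely what allowed us to invoke Theorem \ref{thm:3.4} in the proof of Corollary \ref{cor:3.6}.

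First I would apply Corollary \ref{cor:3.6} to each of $M$ and $N$ separately. This yields the two equalities
$$\|M\| = \|M,g_M\|_\mathrm{Lip} \qquad \text{and} \qquad \|N\| = \|N,g_N\|_\mathrm{Lip},$$
identifying in both cases the ordinary simplicial volume with the geometric simplicial volume. The content here is entirely supplied by the earlier results: tameness gives the compact manifold $V$, the pinched negative curvature forces the boundary fundamental groups to be virtually nilpotent and hence amenable, and the Gromov volume estimate used in the proof of Corollary \ref{cor:3.6} guarantees that $\|M,g_M\|_\mathrm{Lip}$ and $\|N,g_N\|_\mathrm{Lip}$ are finite, so that the second statement of Theorem \ref{thm:3.4} applies.

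Next I would feed these identities into the proportionality principle for the geometric simplicial volume. Since $M$ and $N$ are complete, finite-volume, negatively curved (in particular nonpositively curved) manifolds with isometric universal covers, Theorem \ref{thm:3.2} of L\"oh and Sauer applies directly and gives
$$\frac{\|M,g_M\|_\mathrm{Lip}}{\mathrm{Vol}(M,g_M)} = \frac{\|N,g_N\|_\mathrm{Lip}}{\mathrm{Vol}(N,g_N)}.$$
Substituting the equalities $\|M\| = \|M,g_M\|_\mathrm{Lip}$ and $\|N\| = \|N,g_N\|_\mathrm{Lip}$ into this relation yields the desired proportionality statement for the ordinary simplicial volume, completing the argument.

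In this setting there is essentially no hard step remaining once the preceding results are in place: the corollary is a formal consequence of combining Corollary \ref{cor:3.6} with Theorem \ref{thm:3.2}. If any difficulty is to be isolated, it lies entirely upstream, in verifying that the negatively curved cusps give amenable boundary groups and that the Lipschitz simplicial volume is finite, both of which are already secured in the proof of Corollary \ref{cor:3.6} via tameness and Gromov's curvature-volume estimate. I therefore expect the writing of this proof to be short, with the only care needed being to check that the nonpositive-curvature hypothesis of Theorem \ref{thm:3.2} is indeed met, which follows immediately from the pinched negative curvature assumption.
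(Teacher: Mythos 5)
Your proposal is correct and follows exactly the route the paper indicates: apply Corollary \ref{cor:3.6} to each manifold to identify $\|\cdot\|$ with $\|\cdot\|_\mathrm{Lip}$, then invoke the L\"oh--Sauer proportionality principle (Theorem \ref{thm:3.2}) for the geometric simplicial volume and substitute. Nothing is missing; the finiteness of the Lipschitz volume needed to apply the earlier results is, as you note, already secured by Gromov's curvature--volume estimate in the proof of Corollary \ref{cor:3.6}.
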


\end{document}